\numberwithin{equation}{section}
\newtheorem{theorem}{Theorem}[section]
\newtheorem{lemma}[theorem]{Lemma}
\newtheorem{assumption}[theorem]{Assumption}
\newtheorem{definition}[theorem]{Definition}
\newtheorem{main result}{Main Result}
\newcommand\coma[1]{{\color{red} {#1}}}
\newcommand\dela[1]{}
\def\m{m^{\prime}}
\def\l{\left}
\def\r{\right}
\def\p{\prime}
\title[Relaxed optimal control for stochastic LLB equation with jump noise]{Relaxed optimal control for the stochastic Landau-Lifshitz-Bloch equation with Jump Noise}
\author{Soham Gokhale}
\address{Department of Applied Sciences,\\
Symbiosis Institute of Technology, Symbiosis International (Deemed University) (SIU), Pune}
\email{soham.gokhale@sitpune.edu.in}
\keywords{Stochastic Landau-Lifshitz-Bloch equation, Ferromagnetism, Relaxed optimal control, Young measures}
\thanks{MSC Subject Classification : 60H15}
\thanks{email : soham.gokhale@sitpune.edu.in}
\begin{document}
	\begin{abstract}
		We study the stochastic Landau-Lifshitz-Bloch equation perturbed by pure jump noise. In order to understand and also control the fluctuations and jumps observed in the hysteresis loop, we add noise and an external control to the effective field. We consider the control operator as one that can depend, even nonlinearly on both the control and the associated solution process. We reformulate the problem as a relaxed control problem by using random Young measures and show that the relaxed problem admits an optimal control. The proof employs the Aldous condition for establishing tightness of laws and a modified version of the Skorohod Theorem for obtaining subconvergence of laws.
	\end{abstract}
	\maketitle
	
	\section{Introduction}
	Weiss, see \cite{brown1963micromagnetics} initiated the study of ferromagnetism. Landau and Lifshitz \cite{Landau+Lifshitz_1935_TTheoryOf_MagneticPermeability_Ferromagnetic} and Gilbert \cite{Gilbert} further developed the study.	
	For temperatures $\mathbb{T}$ below the Curie temperature $\mathbb{T}_c$, the magnetization can be modelled by the Landau-Lifshitz-Gilbert (LLG) equation. It assumes that the length of the magnetization vector remains constant. But this does not hold true for higher temperatures, for example in heat assisted magnetic recording (HAMR). Garanin \cite{Garanin_1991_Generalized_EquationOfMotion_Ferromagnet,garanin1997fokker,garanin2004thermal} developed a thermodynamically consistent approach, the Landau-Lifshitz-Bloch (LLB) equation, which is valid for temperatures both below and above the Curie temperature. It essentially interpolates between the LLG at low temperatures and the Ginzburg-Landau theory of phase transitions. Let $\mathcal{O}\subset \mathbb{R}^d,d=1,2,3$ denote the domain with $T$ denoting the terminal time. The LLB equation is given by the following.
	\begin{align}\label{deterministic SLLBE}
		\begin{cases}
			&\frac{\partial m}{\partial t} =  \gamma m \times H_{\text{eff}} + L_1\frac{1}{|m|_{\mathbb{R}^3}^2}(m \cdot H_{\text{eff}})m - L_2\frac{1}{|m|_{\mathbb{R}^3}^2} m\times (m\times H_{\text{eff}}),\\
			&\frac{\partial m}{\partial \eta}(t,x) =  0,\ t>0,\ x\in \partial\mathcal{O}, \\
			&m(0) =  m_0.
		\end{cases}
	\end{align}
	Here, $\gamma$ denotes the damping parameter and $L_1, L_2$ are the longitudinal and transverse damping parameters respectively.	
	The effective field $H_{\text{eff}}$ is given by
	\begin{equation}\label{Definition of H eff}
		H_{\text{eff}} = \Delta m - \frac{1}{\mathcal{X}_{||}}\l( 1 + \frac{3}{5}\frac{\mathbb{T}}{\mathbb{T}-\mathbb{T}_c}\l|m\r|_{\mathbb{R}^3}^2 \r) m.
	\end{equation}	
	where $\mathcal{X}_{||}$ is the longitudinal susceptibility. On the right hand side of \eqref{Definition of H eff}, the first term denotes the exchange field, the second denotes the entropy correction field.
	
		Using the identity
	\begin{align*}
		a \times (b \times c) = b(a \cdot c) - c(a \cdot b)\ \text{for}\ a,b,c\in\mathbb{R}^3,
	\end{align*}
gives
 \begin{align}\label{eqn triple product formula}
		m \times (m \times H_{\text{eff}}) = (m \cdot H_{\text{eff}})m - H_{\text{eff}} \l|m\r|_{\mathbb{R}^3}^2.
	\end{align}
	For temperature above the Curie temperature, we have $L_1 = L_2 = \kappa_1$ (say).	Using the preceeding discussion, along with \eqref{Definition of H eff} and \eqref{eqn triple product formula}, we can write  equation \eqref{deterministic SLLBE} as
	\begin{align}
		\nonumber \frac{\partial m}{\partial t} = \, &  \gamma \, m \times \Delta m -    \frac{\gamma}{\mathcal{X}_{||}}\l( 1 + \frac{3}{5}\frac{\mathbb{T}}{\mathbb{T}-\mathbb{T}_c}\l|m\r|_{\mathbb{R}^3}^2 \r) \l( m \times m \r) + \kappa_1 \Delta m \\
		& - \frac{\kappa_1}{\mathcal{X}_{||}}\l( 1 + \frac{3}{5}\frac{\mathbb{T}}{\mathbb{T}-\mathbb{T}_c}\l|m\r|_{\mathbb{R}^3}^2 \r) m  .
	\end{align}
	Setting $\mu = \frac{3}{5}\frac{\mathbb{T}}{\mathbb{T}-\mathbb{T}_c} $ and $\kappa =  \frac{\kappa_1}{\chi_{||}}$ consequences in the following equality.
	\begin{align}\label{LLB equation with constants}
		\frac{\partial m}{\partial t} = &   \kappa_1 \Delta m + \gamma \, m \times \Delta m -  
		\kappa\l( 1 + \mu \l|m\r|_{\mathbb{R}^3}^2 \r) m 
		.
	\end{align}
	
		Le \cite{LE_Deterministic_LLBE} proved the existence of a weak solution in a bounded domain for $d=1,2,3$.	
	The reader can refer to \cite{Ayouch+Benmouane+Essoufi_2022_RegularSolution_LLB,Hadamache+Hamroun_2020_LargeSolution_LLBE,Li+Guo+Zeng_2021_SmoothSolution_LLBE,Pu+Yang_2022_GlobalSmoothSolutions_LLBE,Wang+EtAl_SmoothSolutionLLBE} and references within for some recent developments.
	The (deterministic) LLB equation turns out to be insufficient, for example, to capture the dispersion of individual trajectories at high temperatures. 
	Hence, Brown  \cite{brown1963micromagnetics,Brown_Thermal_Fluctuations}  modified the LLB equation in order to incorporate random fluctuations and to describe noise induced transitions between equilibrium states of the ferromagnet. The reader can see \cite{Evans_etal_2012_Stochastic_Form_LLBE_Physics,garanin2004thermal} for discussions on stochastic form of LLB equation. Brzezniak, Goldys and Le in \cite{ZB+BG+Le_SLLBE} proved the existence of a weak martingale solution, along with the existence of invariant measures to the stochastic LLB equation. Jiang, Ju and Wang in \cite{Jiang+Ju+Wang_MartingaleWeakSolnSLLBE} show the existence of a weak martingale solution to the stochastic LLB equation. Qiu, Tang and Wang in \cite{Qiu+Tang_Wang_AsymptoticBehaviourSLLBE} establish a large deviations principle, along with a central limit theorem for the stochastic LLB equation for dimension $1$. The authors in \cite{UM+SG_2022_SLLBE_WongZakai} prove the existence of a pathwise unique solution along with proving Wong-Zakai type approximations for the stochastic LLB equations for $d=1,2$.

For example, magnetic nanostructures are extremely susciptible to heat excitations. Since data writing, reading and storing is among the primary application areas, we should take into account how the phenomenon of thermal excitation may be involved in these, either as a consequence or initiating magnetization switching (for example thermally induced magnetization switching).
There are more than one reasons to conduct this study. Another reason is as follows. Impurities and material defects could exist within a magnetic domain, which is traversed by domain walls. Consequently, these defects bind the domain wall, making it non-free. These flaws may serve as small-scale potential obstacles. There may be a snapping effect as an external field is used to move the wall, which could result in a jump in the hysterisis loop.
 The hysteresis loop is therefore distinguished by a series of jumps, known as Barkhausen jumps. Mayergoyz, \textit{et. al.}, in \cite{mayergoyz_2011_LandauLifshitzMagnetizationDynamics_JumpNoise,mayergoyz_2011_MagnetizationDynamics_JumpNoise} introduced jump noise process into magnetization dynamics equations in order to account for the random thermal effects. The exact mechanism of the pinning is imperfectly known \cite{klik+Chang_2013_thermal,puppin+Zani_2004_magnetic}. Our research may contribute to a clearer understanding of the said effects.
	
	 In many cases, the work of Flandoli and Gatarek \cite{Flandoli_Gatarek} is used to establish the tightness of the family of laws of the minimizing sequence of solutions. But when jump noise is taken into account, this method may not be applicable.
	The approach by M\'etivier \cite{Metivier_SPDE_InfDimensions_Book}, Brze\'zniak, \textit{et. al.} \cite{ZB+EH_2009_MaximalRegularlty_StochasticConvolutions_LevyNoise,ZB+EH+Paul_2018_StochasticReactionDiffusion_JumpProcesses},  Motyl \cite{Motyl_2013_SNSE_LevyNoise} depends on a deterministic compactness result, which further depends on some energy bounds and the Aldous condition-a stochastic variant of the Arz\'ela and Ascoli's equicontinuity result. A similar idea has been used in \cite{ZB+UM_SLLGE_JumpNoise,ZB+UM_WeakSolutionSLLGE_JumpNoise} for the stochastic LLG equation. We use the latter approach here.

	We follow the works \cite{ZB+BG+TJ_Weak_3d_SLLGE,ZB+BG+Le_SLLBE} and add the noise to the effective field. Again, taking motivation from \cite{D+M+P+V}, see also \cite{ZB+UM+SG_2022Preprint_SLLGE_Control,UM+SG_2022Preprint_SLLBE_RelaxedControl}, we extend the effective field to include the external control. Here, we add an operator $L$ that can depend (potentially nonlinearly) on both the control process $u$ and the corresponding solution process $m$. Rather than simply adding the external control, we do what is described above.
	
		We first state the problem that we have considered. Let $\mathcal{O}\subset \mathbb{R}^d,d=1,2,3$ be a bounded domain with a smooth boundary $\partial \mathcal{O}$. 
  Let $B$ denote the unit ball in $\mathbb{R}^{N}$ (excluding the center) for a fixed $N\in\mathbb{N}$. Let $T>0$ denote the finite time horizon. Let $\mathbb{U}$ denote the control space that we consider. We assume that $\mathbb{U}$ is a metrizable Suslin space. Consider operator $L: L^2 \times \mathbb{U} \to L^2$. This is the control operator. We consider the following controlled LLB equation with pure jump noise in the Marcus canonical sense. Note that we have replaced the constants $\kappa_1,\kappa,\gamma,\mu$ by $1$ for simplicity.
	\begin{align}\label{eqn controlled problem considered with L}
		\nonumber m(t) = \, & m_0 + \int_{0}^{t} \l[ \Delta m(s) + m(s) \times \Delta m(s) - \l( 1 + \l| m(s) \r|_{\mathbb{R}^3}^2 \r) m(s) + L\l(m(s),u\r) \r] \, ds \\
		& + \int_{0}^{t}\int_{B} \l( m(s) \times h + h \r) \diamond d\mathbb{L}(s),\ t\in[0,T].
	\end{align}
	with the Neumann boundary condition. Here $\mathbb{L}(t) = \l(L_1(t), \dots, L_N(t)\r), N\in\mathbb{N}$ is a $\mathbb{R}^N$-valued L\'evy process with pure jump. For the sake of simplicity, we assume $N = 1$.
 The meaning of the stochastic integral in \eqref{eqn controlled problem considered with L} and its interpretation is explained briefly in Section \ref{section Marcus form}.
	$\mathbb{L}(t)$ is a $\mathbb{R}$-valued L\'evy process with pure jump (i.e., the continuous part of the process is 0). The L\'evy process $\mathbb{L}$ is understood as follows.
	\begin{equation}\label{eqn meaning of L eqn 1}
		L(t) = \int_{0}^{t} \int_{B} l \, \tilde{\eta}(ds,dl) + \int_{0}^{t} \int_{B^c} l \, \eta(ds,dl),\ t\geq 0.
	\end{equation}
	Here $\eta$ denotes a time homogeneous Poisson random measure and $\tilde{\eta}$ denotes the corresponding compensated time homogeneous Poisson random measure, with a compensator $\text{Leb} \, \otimes \nu$ $\text{ i.e. }\l( \tilde{\eta} = \eta - \text{Leb} \otimes \nu \r)$.\\
	\textbf{Associated Cost:}\\
 We now state the cost $J$ that is associated with the control problem \eqref{eqn controlled problem considered with L}. For a control process $u$ and a corresponding solution process $m$, the cost $J$ is given by
	\begin{equation}\label{eqn cost functional}
		J(m,u) = \mathbb{E} \int_{0}^{T} F(m(t),u) \, dt.
	\end{equation}
	Here, $F$ denotes the running cost and $\mathbb{E}$ denotes the expectation with respect to the given probability space.
	
	\section{Some Definitions, Statement of the main result.}
 Let us define an operator $A$ on its domain $D(A)\subset L^2$ to $L^2$ as follows. 
	\begin{align}\label{Definition of Neumann Laplacian}
		\begin{cases}
			D(A) &:= \l\{v\in H^2 : \frac{\partial v}{\partial \vartheta} = 0\ \text{on}\ \partial \mathcal{O} \r\}, \\
			Av &:= -\Delta v, \ \text{for}\ v\in D(A).
		\end{cases}	
	\end{align}
 Here, $\Delta$ denotes the Neumann Laplacian operator and $\vartheta$ denotes the outward pointing normal vector to the boundary $\partial \mathcal{O}$.
 For $\beta \geq 0$, let us define the space	\begin{equation}\label{definition of the space X beta}
		X^{\beta} = \text{dom}\l(A_1^{\beta}\r).
	\end{equation}
Its dual space is denoted by $X^{-\beta}$.
	For $\beta = 0$, we have
	$$X^0 = L^2.$$
	\subsection{The Marcus mapping, Marcus canonical form}\label{section Marcus form}
	This subsection describes the Marcus mapping $\Phi$ and the resulting form (Marcus canonical form) of the equation \eqref{eqn controlled problem considered with L}.
	
	Let $h\in W^{2,\infty}$ denote the given data. Let us define a mapping $g$ as follows.
	\begin{equation}
		\l\{ g:H^1 \ni v \mapsto v \times h \in H^1 \r\}.
	\end{equation}
	The map $g$ is a bounded linear map.	
	We now define a map 
	\begin{equation}
		\Phi: \mathbb{R}_{+} \times \mathbb{R} \times H^1 \to H^1,
	\end{equation}
	such that for every $x\in H^1$, the mapping
	\begin{equation}
		t\mapsto \Phi(t,l,x)
	\end{equation}
	is a $C^1$ solution to the ordinary differential equation
	\begin{equation}
		\frac{d\Phi}{dt}(t,l,x) = l g\l(\Phi(t,l,x)\r),
	\end{equation}
	with initial condition $\Phi(0,l,x) = x$, $x\in H^1$.
	
	The operator $g$ is bounded linear. This implies that the operator $\Phi$ is well defined. Arguing similarly, one can show that the operator
	\begin{equation*}
		\l\{ \mathbb{R}_{+}\times \mathbb{R} \times L^2 \ni \l( t,l,x \r) \mapsto \Phi\l( t,l,x \r) \in L^2 \r\},
	\end{equation*}
	is also well defined. In what follows, let us fix $t = 1$. $\Phi(l,\cdot) := \Phi(1,l,\cdot)$, $l\in\mathbb{R}$.\\
	\textbf{Some properties of the Marcus Mapping $\Phi$:}
 We now state some properties of the mapping $\Phi$. For proofs, we refer the reader to \cite{ZB+UM_WeakSolutionSLLGE_JumpNoise}. The reader can also refer to the preprints \cite{ZB+UM+Zhai_Preprint_LDP_LLGE_JumpNoise,UM+SG_2022Preprint_SLLBE_LDP}.
	\begin{lemma}\label{lemma linear growth Phi}
		Let $X \in \l\{ L^2, H^1 \r\}$. Then there exists a constant $C>0$ such that for every $l\in B$, the following hold.

		\begin{equation}\label{eqn linear growth for Phi}
			\l| \Phi(l,x) \r|_{X} \leq C  \l( 1 + \l| x \r|_{X} \r),\ x\in X.
		\end{equation}
		
		\begin{equation}\label{eqn linear growth 2 for Phi}
			\l| \Phi(l,x) \r|_{X}^2 \leq C  \l( 1 + \l| x \r|_{X}^2 \r),\ x\in X.
		\end{equation}
		
	\end{lemma}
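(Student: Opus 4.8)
The plan is to exploit the fact that $g$ is a bounded linear operator, so that for each fixed $l$ the defining equation $\frac{d\Phi}{dt}(t,l,x) = l\, g(\Phi(t,l,x))$ is a linear autonomous ODE whose flow is the operator exponential $\exp(t\,l\,g)$. Consequently both estimates reduce to a quantitative, \emph{$l$-uniform} bound on this flow, which I would extract through Gronwall's inequality rather than through an explicit exponential series.

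First I would record the operator norm of $g$ on each of the two spaces $X\in\{L^2,H^1\}$. On $L^2$ the pointwise cross product gives $|v\times h|_{L^2}\leq |h|_{L^\infty}|v|_{L^2}$, while on $H^1$, differentiating the cross product as $\nabla(v\times h)=\nabla v\times h + v\times\nabla h$ and using $h\in W^{2,\infty}\hookrightarrow W^{1,\infty}$ yields $|v\times h|_{H^1}\leq C|h|_{W^{1,\infty}}|v|_{H^1}$. In either case there is a finite constant $\kappa:=\|g\|_{\mathcal{L}(X)}$ depending only on the fixed data $h$.

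Next I would pass to the integral form $\Phi(t,l,x)=x+\int_{0}^{t} l\, g(\Phi(s,l,x))\,ds$, take the $X$-norm, and use the bound on $g$ to obtain $|\Phi(t,l,x)|_{X}\leq |x|_{X}+|l|\,\kappa\int_{0}^{t}|\Phi(s,l,x)|_{X}\,ds$. Gronwall's lemma then gives $|\Phi(t,l,x)|_{X}\leq |x|_{X}\exp(|l|\,\kappa\,t)$. Setting $t=1$ and using that $l\in B$ forces $|l|\leq 1$, the exponential factor is bounded uniformly by $C:=e^{\kappa}$, independent of $l$, which yields \eqref{eqn linear growth for Phi} after noting $|x|_{X}\leq 1+|x|_{X}$. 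The quadratic estimate \eqref{eqn linear growth 2 for Phi} then follows either by squaring and applying the elementary inequality $(1+a)^2\leq 2(1+a^2)$, or directly by testing the ODE against $\Phi$ to obtain $\frac{1}{2}\frac{d}{dt}|\Phi|_{X}^2\leq |l|\,\kappa\,|\Phi|_{X}^2$ and applying Gronwall to $|\Phi|_{X}^2$.

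The only point genuinely requiring attention, and hence the one I would flag as the main (though mild) obstacle, is the \emph{uniformity} of the constant $C$ over $l\in B$ and over the two choices of $X$. This is precisely why restricting to the unit ball $B$ and to the fixed time $t=1$ is essential: for unbounded $l$ or growing $t$ the factor $\exp(|l|\,\kappa\,t)$ would not admit a single finite bound, and the stated linear growth would fail.
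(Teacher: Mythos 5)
Your argument is correct: since $g(v)=v\times h$ is bounded linear on both $L^2$ and $H^1$ (using $h\in W^{2,\infty}$), the flow is $\Phi(t,l,x)=e^{tlg}x$, and the Gronwall bound $|\Phi(1,l,x)|_X\leq e^{|l|\,\|g\|_{\mathcal{L}(X)}}|x|_X$ with $|l|\leq 1$ gives \eqref{eqn linear growth for Phi}, with \eqref{eqn linear growth 2 for Phi} following by squaring. The paper itself does not prove this lemma but defers to the cited reference, where essentially this same linearity-plus-Gronwall argument is used, so your proposal matches the intended proof.
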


	Consider the following mappings on $L^2$.
	\begin{align}\label{eqn definition of G}
		G(l,v) = \Phi(l,v) - v,\ v\in L^2:
	\end{align}
	\begin{align}\label{eqn definition of H}
		H(l,v) = \Phi(v) - v - l g(v),\ v\in L^2.
	\end{align}
	With the above two mappings in mind, we state the following lemma.
	
	\begin{lemma}\label{lemma linear growth Lipschitz G and H}
		There exists a constant $C>0$ such that, for every $u,v\in L^2$ and $l\in B$, the following inequalities hold.
		\begin{align}\label{eqn G linear growth}
			\l|G(l,v)\r|_{L^2} \leq C \l( 1 + \l| v \r|_{L^2}\r),
		\end{align}
		\begin{align}\label{eqn G Lipschitz}
			\l| G(l,u) - G(l,v) \r|_{L^2} \leq C \l| u - v \r|_{L^2},
		\end{align}
		\begin{align}\label{eqn H linear growth}
			\l|H(l,v)\r|_{L^2} \leq C \l( 1 + \l| v \r|_{L^2}\r),
		\end{align}
		\begin{align}\label{eqn H Lipschitz}
			\l| H(l,u) - H(l,v) \r|_{L^2} \leq C \l| u - v \r|_{L^2}.
		\end{align}
	\end{lemma}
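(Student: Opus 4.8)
The plan is to exploit the single structural fact that $g$ is \emph{linear}. Because $g\in\mathcal{L}(L^2)$ is a bounded linear operator (indeed $g(v)=v\times h$ with $h\in W^{2,\infty}\subset L^\infty$), the initial value problem defining $\Phi$, namely $\frac{d}{dt}\Phi(t,l,x)=l\,g(\Phi(t,l,x))$ with $\Phi(0,l,x)=x$, is a \emph{linear} ODE in the Banach space $L^2$. Consequently its time-one flow is the operator exponential, $\Phi(l,\cdot)=\Phi(1,l,\cdot)=e^{lg}\in\mathcal{L}(L^2)$, and in particular $\Phi(l,\cdot)$ is linear in its spatial argument. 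Writing $C_g:=\|g\|_{\mathcal{L}(L^2)}\le|h|_{L^\infty}$ and using $|l|\le1$ for $l\in B$, the first step is to record the two uniform operator-norm estimates
\begin{align*}
	\l\| e^{lg}-I \r\|_{\mathcal{L}(L^2)} \le e^{C_g}+1, \qquad
	\l\| e^{lg}-I-lg \r\|_{\mathcal{L}(L^2)} \le \sum_{k\ge2}\frac{C_g^{\,k}}{k!} = e^{C_g}-1-C_g,
\end{align*}
both valid for every $l\in B$, the first by the triangle inequality and $\|e^{lg}\|\le e^{C_g}$, the second termwise from the exponential series.

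With these in hand the four inequalities are immediate. Since $G(l,v)=\l(e^{lg}-I\r)v$ is linear in $v$, the Lipschitz bound \eqref{eqn G Lipschitz} is exactly $\l|G(l,u)-G(l,v)\r|_{L^2}=\l|(e^{lg}-I)(u-v)\r|_{L^2}\le(e^{C_g}+1)\l|u-v\r|_{L^2}$, while the linear-growth bound \eqref{eqn G linear growth} follows from the triangle inequality together with Lemma \ref{lemma linear growth Phi}: $\l|G(l,v)\r|_{L^2}\le\l|\Phi(l,v)\r|_{L^2}+\l|v\r|_{L^2}\le C(1+\l|v\r|_{L^2})$. Identically, $H(l,v)=\l(e^{lg}-I-lg\r)v$ is linear in $v$, so \eqref{eqn H Lipschitz} and \eqref{eqn H linear growth} follow at once from the second operator-norm bound above.

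If one prefers to avoid the operator exponential, the same conclusions can be reached directly from the ODE. The uniform Lipschitz estimate for the flow $\Phi$ — the only ingredient not already contained in Lemma \ref{lemma linear growth Phi} — comes from a Gronwall argument applied to $w(t):=\Phi(t,l,u)-\Phi(t,l,v)$, which by linearity of $g$ solves $\dot w=l\,g(w)$, $w(0)=u-v$; differentiating $\l|w(t)\r|_{L^2}^2$ and using $\l|l\r|\le1$ yields $\l|\Phi(l,u)-\Phi(l,v)\r|_{L^2}\le e^{C_g}\l|u-v\r|_{L^2}$. For the $H$-estimates one uses the integral form $\Phi(l,v)-v=\int_0^1 l\,g(\Phi(s,l,v))\,ds$ and linearity of $g$ to rewrite $H(l,v)=\int_0^1 l\,g\l(\Phi(s,l,v)-v\r)ds$, after which Lemma \ref{lemma linear growth Phi} and the Lipschitz bound just obtained close the argument.

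I do not expect any real obstacle: once linearity of $g$ is invoked, all four estimates become bounded-operator facts, uniform in $l\in B$ precisely because $|l|\le1$. The one point meriting a little care is that $H$ is the first-order Taylor remainder of the flow; one may use the cancellation in $e^{lg}-I-lg$ to obtain the sharper constant $e^{C_g}-1-C_g$, but this is optional, since for the stated inequalities a crude triangle-inequality bound on $\Phi(l,v)-v$ and $l\,g(v)$ separately already suffices.
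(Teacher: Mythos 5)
Your proposal is correct. The paper itself does not prove this lemma; it defers to \cite{ZB+UM_WeakSolutionSLLGE_JumpNoise}, where the argument is essentially the one you give: since $g(v)=v\times h$ is bounded and linear on $L^2$ (with norm controlled by $|h|_{L^\infty}$), the flow is the operator exponential $\Phi(l,\cdot)=e^{lg}$, and all four estimates reduce to uniform-in-$l\in B$ operator-norm bounds on $e^{lg}-I$ and on the first-order Taylor remainder $e^{lg}-I-lg$. Your only implicit reading choices — that $\Phi(v)$ in the definition of $H$ means $\Phi(l,v)$, and that $g$ acts boundedly on $L^2$ and not merely on $H^1$ — are both consistent with the paper's setup.
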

 We now introduce another operator $b$ defined by
 \begin{equation}\label{eqn definition of b}
     b(v) = \int_{B} H(l,v) \, \nu(dl),\ v\in L^2.
 \end{equation}
	\noindent	
	\textbf{Equation in Marcus Canonical Form:}
 So far, we have described the Marcus mapping $\Phi$, along with some of its properties. We now use this, along with the operators $G,H,b$ defined in \eqref{eqn definition of G}, \eqref{eqn definition of H}, \eqref{eqn definition of b} above to understand equation \eqref{eqn controlled problem considered with L}. More specifically, the equation \eqref{eqn controlled problem considered with L} can be understood as the following.	
	\begin{align}\label{eqn controlled problem considered Marcus Form}
		\nonumber m(t) = & \, m_0 + \int_{0}^{t} \l[ \Delta m(s) + m(s) \times \Delta m(s) - \l( 1 + \l| m(s) \r|_{\mathbb{R}^3}^2 \r) m(s) + L\l(m(s),u\r) \r] \, ds \\
		& + \int_{0}^{t} b\bigl(m(s)\bigr) \, ds + \int_{0}^{t} \int_{B} G\bigl(l,m(s)\bigr) \, \tilde{\eta}(dl,ds),\ t\in[0,T].
	\end{align}
	
	\subsection{Relaxed Controlled Equation}
 In this subsection, we use the theory of Young measures to reduce the problem \eqref{eqn controlled problem considered Marcus Form} to a relaxed form, for which we show the existence of an optimal control. For a random Young measure $\lambda$, the relaxed controlled equation corresponding to the equation \eqref{eqn controlled problem considered Marcus Form} is given by
	\begin{align}\label{eqn relaxed controlled problem considered Marcus Form}
		\nonumber m(t) = & \, m_0 + \int_{0}^{t} \l[ \Delta m(s) + m(s) \times \Delta m(s) - \l( 1 + \l| m(s) \r|_{\mathbb{R}^3}^2 \r) m(s) +  \r] \, ds \\
		\nonumber & + \int_{0}^{t} \int_{\mathbb{U}} L\l(m(s),v\r) \lambda(dv,ds) \\
		& + \int_{0}^{t} b\bigl(m(s)\bigr) \, ds + \int_{0}^{t} \int_{B} G\bigl(l,m(s)\bigr) \, \tilde{\eta}(dl,ds),\ t\in[0,T].
	\end{align}
	\textbf{Associated Relaxed Cost:}\\
	For a weak martingale solution $\pi$ (see Definition \ref{definition weak martingale solution}), the relaxed cost $\mathscr{J}(\pi)$ (corresponding to \eqref{eqn relaxed controlled problem considered Marcus Form}) is given by
	\begin{equation}\label{eqn relaxed cost functional}
		\mathscr{J}(\pi) = \mathbb{E} \int_{0}^{T} \int_{\mathbb{U}} F(m(t),v) \, \lambda(dv,dt),
	\end{equation}
 with $F,\mathbb{E}$ being the same as in \eqref{eqn cost functional}.
 	
	\subsection{Definitions of some spaces}\label{section some spaces}
	Later on, we use certain compactness results from \cite{Aldous_1978_Stopping_times_and_tightness,Metivier_SPDE_InfDimensions_Book,Motyl_2013_SNSE_LevyNoise}, among others.
	Towards that, we define some spaces which will be used. We have borrowed definitions, results from Billingsley \cite{Billingsley_1999_ConvergenceOfProbabilityMeasures_Book}, Parthasarthy \cite{Parthasarathy_1967_Book_ProbabilityMeasuresOnMetricSpaces_Book}, M\'etivier \cite{Metivier_SPDE_InfDimensions_Book}, Aldous \cite{Aldous_1978_Stopping_times_and_tightness,Aldous_1989_StoppingTimes2} among others.

	\begin{enumerate}
		\item The space $\mathbb{D}([0,T] : X^{-\beta})$ : This is the space of all c\`adl\`ag functions $v: [0,T] \to X^{-\beta}$ with the topology induced by the Skorohod metric $\delta_{T,X^{-\beta}}$ given by
		
		\begin{align}
			\nonumber \delta_{T,X^{-\beta}}(x,y) : = \inf_{\lambda\in\Lambda_T} \bigg[ & \sup_{t\in[0,T]}  \rho\bigl( x(t) , y( \lambda(t) ) \bigr) + \sup_{t\in[0,T]} \l| t - \lambda(t) \r| \\
			& + \sup_{s<t}\l| \log \l( \frac{ \lambda(t) - \lambda(s) }{ t - s } \r) \r| \bigg].
		\end{align}
		Here $\Lambda_T$ is the set of all increasing homeomorphisms $[0,T]$ and $\rho$ denotes the norm generated metric on the space $X^{-\beta}$.

		\item The space $L^2_{\text{w}}(0,T:H^2)$ : This is the space $L^2(0,T:H^2)$ endowed with the weak topology.
		\item The space $\mathbb{D}([0,T] : H^1_{\text{w}})$ : This is the space of all weakly c\'adl\'ag functions $v: [0,T] \to H^1$, with the weakest topology such that for all $u\in H^1$, the maps
		\begin{align*}
			\mathbb{D}([0,T] : H^1_{\text{w}}) \ni v \mapsto \l\langle v (\cdot) , u \r\rangle_{H^1} \in \mathbb{D}([0,T] : \mathbb{R}),
		\end{align*}
		are continuous.
	\end{enumerate}
	\subsection{Definitions, statement of the main result}
	\begin{definition}\label{definition weak martingale solution}
		A weak martingale solution to the problem \eqref{eqn relaxed controlled problem considered Marcus Form} is a tuple
		\begin{align*}
			\l( \Omega^\p , \mathcal{F}^\p , \mathbb{F}^\p , \mathbb{P}^\p , M^\p , \eta^\p, \lambda^\p\r),
		\end{align*}
		such that
		\begin{enumerate}
			\item $\l( \Omega^\p , \mathcal{F}^\p , \mathbb{F}^\p , \mathbb{P}^\p\r)$ is a probability space (with filtration $\mathbb{F}^\p = \l\{ \mathcal{F}_t^\p \r\}_{t\in[0,T]}$) satisfying the usual hypotheses.
			\item $\eta^\p$ is a time homogeneous Poisson random measure on the measurable space $\l(B,\mathcal{B}\r)$ over $\l( \Omega^\p , \mathcal{F}^\p , \mathbb{F}^\p , \mathbb{P}^\p\r)$ with intensity measure $Leb \otimes \nu$.
			\item $\lambda^\p$ is a random Young measure on the space $\mathbb{U}$ having the same law as $\lambda$ on the space $\mathcal{Y}(0,T:\mathbb{U})$.
			\item $\m:[0,T] \times \Omega ^\p \to H^1$ is an $\mathbb{F}^\p$-progressively measurable weakly c\'adl\'ag process which satisfies the following estimates. There exists a constant $C>0$ such that
			\begin{enumerate}
				\item \begin{equation}\label{eqn weak martingale solution definition bound 1}
					\mathbb{E}^\p\sup_{t\in[0,T]}\l|\m(t)\r|_{H^1}^2 \leq C,
				\end{equation}
				\item \begin{equation}\label{eqn weak martingale solution definition bound 2}
					\mathbb{E}^\p\int_{0}^{T} \l|\m(t)\r|_{H^2}^2 \, dt \leq C.
				\end{equation}
			\end{enumerate}
			\item For each $V\in L^4(\Omega^\p: H^{1})$, \eqref{eqn relaxed controlled problem considered Marcus Form} is satisfied $\mathbb{P}^\p$-a.s. in the weak (PDE) sense as follows.
			\begin{align}\label{eqn weak martingale solution definition eqn weak form}
				\nonumber \m(t) = & \, m_0 - \int_{0}^{t}  \l\langle \nabla \m(s) , \nabla V \r\rangle_{L^2} \, ds +  \int_{0}^{t}  \l\langle \m(s) \times \nabla \m(s) , \nabla V \r\rangle_{L^2} \, ds \\
				\nonumber & -  \int_{0}^{t} \int_{\mathcal{O}} \l( 1 + \l|\m(s,x)\r|_{\mathbb{R}^3}^2 \r) \l\langle \m(s,x) ,  V(x) \r\rangle_{\mathbb{R}^3} \, dx \, ds \\
				\nonumber & + \int_{0}^{t} \int_{\mathbb{U}} \l\langle L\l(\m(s),v\r) , V \r\rangle_{L^2} \, \lambda^\p(dv,ds)\\
				& + \int_{0}^{t} \l\langle b\bigl(l,\m(s)\bigr) , V \r\rangle_{L^2} \, ds + \int_{0}^{t} \int_{B} \l\langle G\bigl(l,\m(s)\bigr) , V \r\rangle_{L^2} \, \tilde{\eta}(dl,ds),\ t\in[0,T].
			\end{align}
		\end{enumerate}
	\end{definition}
	
	\begin{definition}\label{definition optimal control}
		A weak martingale solution $\pi$ is said to be admissible if $\mathscr{J}(\pi) < \infty$. We denote by $\mathcal{U}_{m_0,T}$, the class of all admissible solutions with initial data $m_0$ and terminal time $T$. A weak martingale solution $\pi^* \in \mathcal{U}_{m_0,T}$ is said  to be an optimal control to the relaxed problem \eqref{eqn relaxed controlled problem considered Marcus Form} with the associated cost \eqref{eqn relaxed cost functional} if $\pi^*$ minimizes the cost. That is, if
		\begin{equation}
	         \mathscr{J}(\pi^*) = \inf_{\pi\in\mathcal{U}_{m_0,T}} \mathscr{J}(\pi) = \Lambda \text{ (say) }.
		\end{equation}
		
		\begin{assumption}\label{assumption}
			Assumptions
			\begin{enumerate}
				\item We assume the following on the control operator $L$.
				\begin{enumerate}
					\item There exists an inf-compact function $\kappa$ and a constant $C$ such that for some $0 \leq r < 2$, the following inequality holds.
					\begin{equation}\label{eqn assumption on the operator L 3 terms}
						\l| L(m,v) \r|_{L^2} \leq C \bigl( \l| m \r|_{L^2}^r + \kappa(\cdot,v) + \l| m \r|_{L^2}^r \kappa(\cdot,v) \bigr).
					\end{equation}
					In what follows, we consider only the third term on the right hand side of the above inequality. This does not change the complexity of the problem, while reducing certain calculation lengths. Therefore the assumption \eqref{eqn assumption on the operator L 3 terms} reduces to
					\begin{equation}\label{eqn assumption on the operator L}
						\l| L(m,v) \r|_{L^2} \leq C \l| m \r|_{L^2}^r \kappa(\cdot,v).
					\end{equation}
     
					\item We assume that the function $L$ is continuous in the following sense. For any $R>0$, let $K_R = \l\{ v\in\mathbb{U} : \kappa(\cdot,v) \leq R \r\}$. Let $m_n \to m$ in $L^4(\Omega:L^4(0,T:L^4))$ as $n\to\infty$. Then
					\begin{align}
						\mathbb{E}\int_{0}^{T} \sup_{v\in K_R} \l| L(m_n,v) - L(m,v) \r|_{L^2}^2 \, ds \to 0,\ \text{as} \ n\to\infty.
					\end{align}
					
					\item For each $m\in L^2$, the mapping
					\begin{equation}
						L(m,\cdot):\mathbb{U} \to L^2,
					\end{equation}
					is continuous.
				\end{enumerate}
				\item We assume the following for the inf-compact function $\kappa$.
    \begin{enumerate}
    \item For the function $\kappa$ mentioned above, there exists a constant $C>0$ such that
				\begin{equation}\label{eqn coercivity assumption}
					F(t,m(t),v) \geq C \kappa^{4} (t,v),\ \forall v\in \mathbb{U},\ t\in[0,T],\ \forall m\in H^1.
				\end{equation}
    \item For the inf-compact function $\kappa$, we assume that for any random Young measure $\lambda$,
     \begin{equation}\label{eqn assumption on kappa}
         \mathbb{E} \int_{0}^{T} \kappa^4(s,v) \, \lambda(dv,ds) < \infty.
     \end{equation}
     
     \end{enumerate}
				\item The cost functional $F$ is lower semicontinuous on the space $(H^1)^\p$. That is, if $m_n\to m$  as $n\to\infty$ in $(H^1)^\p$ then
				\begin{equation}
					F(m) \leq \liminf_{n\to\infty} F(m_n).
				\end{equation}
				\item For the given data $h$ and initial data $m_0$, we assume the following.
				\begin{equation}
					h\in W^{2,\infty} \text{ and } m_0\in H^1.
				\end{equation}
			\end{enumerate}
		\end{assumption}
	\end{definition}
	
	\begin{theorem}\label{thm Optimal control}
		Let $\mathcal{O}\subset \mathbb{R}^d,d=1,2,3$ denote a bounded domain with smooth boundary. Let the terminal time $0<T<\infty$ be fixed. Further, let the Assumption \ref{assumption} hold. Then the relaxed control problem \eqref{eqn relaxed controlled problem considered Marcus Form} admits an optimal control with the cost \eqref{eqn relaxed cost functional}. In other words, the problem \eqref{eqn controlled problem considered with L} admits a relaxed optimal control with cost \eqref{eqn cost functional}.
	\end{theorem}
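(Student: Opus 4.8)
The plan is to use the direct method of the calculus of variations adapted to the stochastic setting: take a minimizing sequence of admissible weak martingale solutions, extract a subsequence converging through tightness and a Skorohod-type representation, identify the limit as a new weak martingale solution, and then invoke lower semicontinuity of the cost to conclude optimality. Concretely, since the admissible class $\mathcal{U}_{m_0,T}$ is nonempty (a weak martingale solution to the relaxed equation being provided by the accompanying existence theory), the infimum $\Lambda$ is finite, and I would first pick $\pi_n = (\Omega_n,\mathcal{F}_n,\mathbb{F}_n,\mathbb{P}_n,M_n,\eta_n,\lambda_n)$ with $\mathscr{J}(\pi_n)\to\Lambda$. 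The coercivity assumption \eqref{eqn coercivity assumption} converts $\sup_n\mathscr{J}(\pi_n)<\infty$ into the uniform control $\sup_n\mathbb{E}_n\int_0^T\int_{\mathbb{U}}\kappa^4(s,v)\,\lambda_n(dv,ds)<\infty$, which, fed through the growth bound \eqref{eqn assumption on the operator L}, dominates the control term uniformly in $n$. Combined with an application of Itô's formula to the energy functional of the LLB equation, in which the Marcus correction $b$ and the jump term $G$ are estimated via Lemma \ref{lemma linear growth Phi} and Lemma \ref{lemma linear growth Lipschitz G and H}, this yields the a priori bounds \eqref{eqn weak martingale solution definition bound 1}--\eqref{eqn weak martingale solution definition bound 2} with a constant independent of $n$.

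Next I would establish tightness of the laws of $(m_n,\lambda_n,\eta_n)$ on the path spaces $\mathbb{D}([0,T]:X^{-\beta})$, $L^2_{\text{w}}(0,T:H^2)$ and $\mathbb{D}([0,T]:H^1_{\text{w}})$, times the Young-measure space $\mathcal{Y}(0,T:\mathbb{U})$, times the space of Poisson random measures. The energy bounds give tightness in the weak topologies, while tightness in $\mathbb{D}([0,T]:X^{-\beta})$ comes from the Aldous condition: because the noise is of pure-jump type the classical Flandoli--Gatarek criterion fails, so one verifies the Aldous condition directly by estimating, over stopping times and small shifts, the $X^{-\beta}$-increments of $m_n$ term by term, the compensated integral being handled through the Itô isometry and the growth estimate of Lemma \ref{lemma linear growth Lipschitz G and H}. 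Tightness of $\lambda_n$ follows from the inf-compactness of $\kappa$, since the uniform bound on $\mathbb{E}_n\int\int\kappa^4\,\lambda_n$ forces the mass of $\lambda_n$ to concentrate, up to small error, on the compact sublevel sets $K_R=\{v:\kappa(\cdot,v)\leq R\}$. As the weak spaces $L^2_{\text{w}}(0,T:H^2)$ and $\mathbb{D}([0,T]:H^1_{\text{w}})$ are not metrizable, I would then apply the Jakubowski--Skorohod representation theorem to obtain, on one new probability space $(\bar\Omega,\bar{\mathcal{F}},\bar{\mathbb{P}})$, random elements $(\bar m_n,\bar\lambda_n,\bar\eta_n)$ with the same laws converging almost surely to a limit $(\bar m,\bar\lambda,\bar\eta)$.

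The core of the argument is to pass to the limit in the weak form \eqref{eqn weak martingale solution definition eqn weak form} along the new sequence and identify $(\bar\Omega,\bar{\mathcal{F}},\bar{\mathbb{F}},\bar{\mathbb{P}},\bar M,\bar\eta,\bar\lambda)$ as a weak martingale solution. The linear term converges by weak $L^2(0,T:H^2)$ convergence; the nonlinear terms $\bar m_n\times\Delta\bar m_n$ and $(1+|\bar m_n|^2)\bar m_n$ need strong convergence of $\bar m_n$ in $L^2(0,T:H^1)$ and in $L^4(\Omega:L^4(0,T:L^4))$, which the compact embeddings supply from the combination of $\mathbb{D}([0,T]:X^{-\beta})$ and $L^2_{\text{w}}(0,T:H^2)$ tightness. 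The control term requires showing that $\int_0^T\int_{\mathbb{U}}\langle L(\bar m_n(s),v),V\rangle\,\bar\lambda_n(dv,ds)$ converges to $\int_0^T\int_{\mathbb{U}}\langle L(\bar m(s),v),V\rangle\,\bar\lambda(dv,ds)$, which I would split into a replacement of $\bar m_n$ by $\bar m$ (controlled by the uniform continuity of $L$ on $K_R$ from Assumption \ref{assumption}), a narrow-convergence part of $\bar\lambda_n\to\bar\lambda$ tested against the continuous integrand $v\mapsto\langle L(\bar m(s),v),V\rangle$, and a tail estimate outside $K_R$ absorbed by the $\kappa^4$ bound \eqref{eqn assumption on kappa}. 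The stochastic term passes to the limit using the convergence of $\bar\eta_n$ and the Lipschitz and growth properties of $G$ in Lemma \ref{lemma linear growth Lipschitz G and H}.

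Finally, optimality follows from lower semicontinuity: the assumed lower semicontinuity of $F$ on $(H^1)^\p$ together with the coercive lower bound \eqref{eqn coercivity assumption} renders $\mathscr{J}$ sequentially lower semicontinuous along the narrow convergence of $\bar\lambda_n$ and the strong convergence of $\bar m_n$, so that $\mathscr{J}(\bar\pi)\leq\liminf_n\mathscr{J}(\pi_n)=\Lambda$; since $\bar\pi$ is itself admissible, this forces $\mathscr{J}(\bar\pi)=\Lambda$, so $\bar\pi$ is the desired optimal control. I expect the main obstacle to be the joint passage to the limit in the control term, where the nonlinear dependence of $L$ on both $m$ and $v$ must be reconciled with the narrow convergence of the Young measures and the merely weak convergence of $m_n$ in the top-order norm; the inf-compactness of $\kappa$ and the precisely chosen continuity hypothesis on $L$ are exactly what make this step succeed, while verifying the Aldous condition for the pure-jump stochastic integral is the secondary technical hurdle.
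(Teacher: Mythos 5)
Your proposal is correct and follows essentially the same route as the paper: a minimizing sequence, uniform energy bounds derived from the coercivity of $F$ via $\kappa^4$, tightness through the Aldous condition and the Jakubowski--Skorohod representation, identification of the limit as a weak martingale solution, and the same cut-off decomposition (on the sublevel sets $K_R$ of $\kappa$) to pass to the limit in the control term before concluding with lower semicontinuity and Fatou's lemma. No substantive differences to report.
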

	
	\section{Proof of Theorem \ref{thm Optimal control}}
	\begin{proof}[Heuristic idea of the proof]
		First of all, we can observe that if we let $L=0$, the problem reduces to an uncontrolled problem, for which one can perhaps show the existence of a weak martingale solution (see for instance the preprint \cite{UM+SG_2022Preprint_SLLBE_LDP}, see also \cite{UM+SG_2022Preprint_SLLBE_RelaxedControl}). 
		Moreover, we have assumed a fairly general cost functional, without any special convexity assumptions. Hence it makes sense for us to assume that there exists at least one weak martingale solution with finite cost. Otherwise, if there is no solution with finite cost, then the problem reduces to showing the existence of a weak martingale solution. We therefore assume that the infimum ($\Lambda$) of the cost is finite. That is,
		\begin{equation}
			\inf_{\pi\in\mathcal{U}_{m_0,T}}\mathscr{J}(\pi) = \Lambda < \infty.
		\end{equation}
		Therefore, there exists a minimizing sequence 
  $$\l\{ \pi_n \r\}_{n\in\mathbb{N}} = \l( \Omega_n , \mathbb{F}_n,\mathcal{F}_n,\mathbb{P}_n,m_n,\lambda_n,W_n\r),$$
  of weak martingale solutions. In particular,
		\begin{equation}\label{eqn pi n is a minimizing sequence}
			\lim_{n\to\infty} \mathscr{J}(\pi_n) = \Lambda.
		\end{equation}
		Therefore for this sequence, the corresponding cost is uniformly bounded. Using this information, we establish some uniform bounds for the solution processes $m_n,n\in\mathbb{N}$ in the weak martingale solution tuples $\pi_n$. Using these bounds, we obtain a subsequence of the solution processes which converges (in some sense) to another weak martingale solution of the same problem. We then show that the obtained solution indeed minimizes the cost, thus concluding the proof of the theorem.
	\end{proof}
	The remainder of this section is divided into multiple subsections and lemmas, which culminate into the proof of Theorem \ref{thm Optimal control}.
	\subsection{Uniform Energy Estimates}
 We now obtain uniform bounds on the processes $m_n,n\in\mathbb{N}$. The proofs are applications of the It\^o formula to the functions
 \begin{equation}
     \l\{ H^1 \ni v \mapsto \frac{1}{2} \l| v \r|_{L^2}^2 \in \mathbb{R} \r\} ,
 \end{equation}
 and
 \begin{equation}
     \l\{ H^1 \ni v\mapsto \frac{1}{2} \l| \nabla v \r|_{L^2}^2 \in \mathbb{R} \r\} ,
 \end{equation}
 to the processes $m_n,n\in\mathbb{N}$, followed by simplifications and applying the Gronwall inequality. One has to be careful as the processes $m_n,n\in\mathbb{N}$ are infinite dimensional. One can apply the It\^o formula from the work of Gyongy and Krylov \cite{Gyongy+Krylov_1982_StochasticEquationsSemimartingales}, see also Appendix in \cite{ZB+UM_WeakSolutionSLLGE_JumpNoise}. We skip the proofs here for brevity, and refer the reader to \cite{ZB+BG+Le_SLLBE,ZB+UM_WeakSolutionSLLGE_JumpNoise} for similar arguments. One can also look at the preprints \cite{UM+SG_2022Preprint_SLLBE_LDP,UM+SG_2022Preprint_SLLBE_RelaxedControl}. for some related results in detail.
	\begin{lemma}\label{lemma bounds 1}
		There exists a constant $C>0$ such that for every $n\in\mathbb{N}$, the following hold.
		\begin{equation}\label{eqn L infinity L2 bound mn}
			\mathbb{E}^n \sup_{t\in[0,T]} \l| m_n(t) \r|_{L^2}^2 \leq C,
		\end{equation}
		\begin{equation}\label{eqn L 2 H1 bound mn}
			\mathbb{E}^n \int_{0}^{T} \l| m_n(t) \r|_{H^1}^2 \, dt \leq C,
		\end{equation}
		\begin{equation}\label{eqn L 4 L4 bound mn}
			\mathbb{E}\int_{0}^{T} \l| m_n(t) \r|_{L^4}^4 \, dt \leq C.
		\end{equation}
Note that $E^n$ denotes the expectation on the space $\l( \Omega_n , \mathbb{F}_n,\mathcal{F}_n,\mathbb{P}_n \r)$, for $n\in\mathbb{N}$.
	\end{lemma}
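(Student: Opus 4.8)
The plan is to extract all three estimates from a single application of the It\^o formula to the functional $\phi(v)=\tfrac12|v|_{L^2}^2$ evaluated along $m_n$, exploiting the dissipative structure of \eqref{eqn relaxed controlled problem considered Marcus Form}. Because $m_n$ is infinite dimensional and satisfies the equation only in the weak/$X^{-\beta}$ sense, I would first invoke the generalized It\^o formula for Hilbert-space-valued semimartingales of Gy\"ongy and Krylov \cite{Gyongy+Krylov_1982_StochasticEquationsSemimartingales} (as in the appendix of \cite{ZB+UM_WeakSolutionSLLGE_JumpNoise}); justifying its applicability is a technical point rather than the substance of the argument.

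Applying the formula, three of the drift pairings are either dissipative or vanish: after integrating by parts (the boundary term dropping by the Neumann condition) the exchange term gives $\langle \Delta m_n, m_n\rangle_{L^2}=-|\nabla m_n|_{L^2}^2$; the gyromagnetic term vanishes pointwise, $\langle m_n\times\Delta m_n, m_n\rangle_{L^2}=0$, since $m_n\times\Delta m_n\perp m_n$; and the reaction term contributes $-\langle(1+|m_n|^2)m_n,m_n\rangle_{L^2}=-|m_n|_{L^2}^2-|m_n|_{L^4}^4$. These three negative quantities are precisely what is to be bounded. Since $\phi$ is quadratic, the jump part of the It\^o correction against the compensated measure reduces to the compensator $\tfrac12\int_0^t\int_B|G(l,m_n)|_{L^2}^2\,\nu(dl)\,ds$, which together with the Marcus drift $\int_0^t\langle b(m_n),m_n\rangle_{L^2}\,ds$ is controlled by $C\int_0^t(1+|m_n|_{L^2}^2)\,ds$ via the linear growth bounds \eqref{eqn G linear growth} and \eqref{eqn H linear growth}; what remains of the stochastic integral is a mean-zero martingale.

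The hard part will be the control term $\int_0^t\int_{\mathbb{U}}\langle L(m_n,v),m_n\rangle_{L^2}\,\lambda_n(dv,ds)$, which must be tamed using both the structural assumption on $L$ and the coercivity of the cost. By \eqref{eqn assumption on the operator L} and Cauchy--Schwarz this is bounded by $C\int_0^t\int_{\mathbb{U}}|m_n|_{L^2}^{r+1}\kappa(\cdot,v)\,\lambda_n(dv,ds)$ with $r<2$. I would then apply Young's inequality with exponents $4/3$ and $4$ to split $|m_n|_{L^2}^{r+1}\kappa$ into $\varepsilon|m_n|_{L^2}^{(r+1)\cdot 4/3}+C_\varepsilon\kappa^4$; the condition $r<2$ forces $(r+1)\cdot\tfrac43<4$, so a further Young step combined with the bounded-domain embedding $|m_n|_{L^2}^4\leq|\mathcal{O}|\,|m_n|_{L^4}^4$ lets me absorb the first piece into the dissipative $-\int_0^t|m_n|_{L^4}^4$ for $\varepsilon$ small. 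The $\kappa^4$ piece is where the cost enters: by coercivity \eqref{eqn coercivity assumption} and \eqref{eqn assumption on kappa}, and since $\{\pi_n\}$ is a minimizing sequence, $\mathscr{J}(\pi_n)$ and hence $\mathbb{E}^n\int_0^T\int_{\mathbb{U}}\kappa^4(s,v)\,\lambda_n(dv,ds)$ are uniformly bounded in $n$.

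Collecting terms, taking expectations (so the martingale vanishes), and absorbing, I arrive at
\[
\mathbb{E}^n|m_n(t)|_{L^2}^2+\mathbb{E}^n\int_0^t|\nabla m_n|_{L^2}^2\,ds+\tfrac12\mathbb{E}^n\int_0^t|m_n|_{L^4}^4\,ds\leq C+C\int_0^t\mathbb{E}^n|m_n(s)|_{L^2}^2\,ds,
\]
and Gronwall's inequality gives $\sup_{t\in[0,T]}\mathbb{E}^n|m_n(t)|_{L^2}^2\leq C$ uniformly in $n$; feeding this back yields \eqref{eqn L 2 H1 bound mn} (as $\int_0^T|m_n|_{H^1}^2=\int_0^T|m_n|_{L^2}^2+\int_0^T|\nabla m_n|_{L^2}^2$) and \eqref{eqn L 4 L4 bound mn}. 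Finally, for the pathwise supremum bound \eqref{eqn L infinity L2 bound mn} I would return to the pre-expectation identity, apply the Burkholder--Davis--Gundy inequality to the jump martingale, estimate its predictable bracket through \eqref{eqn G linear growth} by $C\int_0^T|m_n|_{L^2}^2(1+|m_n|_{L^2}^2)\,ds$, and use the standard device of splitting off $\tfrac12\mathbb{E}^n\sup_{t}|m_n|_{L^2}^2$ to be absorbed on the left, closing the estimate via the $L^2$-bounds already established.
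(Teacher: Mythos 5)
Your proposal is correct and follows exactly the route the paper indicates (the paper only sketches this proof, citing the It\^o formula of Gy\"ongy--Krylov applied to $v\mapsto\tfrac12|v|_{L^2}^2$ followed by Gronwall, and defers details to the references). Your treatment of the control term --- Young's inequality with exponents $4/3$ and $4$ using $r<2$, absorption of the $|m_n|_{L^4}^4$ piece into the dissipative reaction term, and the uniform bound on $\mathbb{E}^n\int_0^T\int_{\mathbb{U}}\kappa^4\,\lambda_n(dv,ds)$ via coercivity and the minimizing-sequence property --- is precisely the ingredient that makes the constant uniform in $n$, and is sound.
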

	
	\begin{lemma}\label{lemma bounds 2}
		There exists a constant $C>0$ such that for every $n\in\mathbb{N}$, the following hold.
		\begin{equation}\label{eqn L infinity H1 bound mn}
			\mathbb{E}^n \sup_{t\in[0,T]} \l| m_n(t) \r|_{H^1}^2 \leq C,
		\end{equation}

		\begin{equation}\label{eqn L 2 H2 bound mn}
			\mathbb{E}^n \int_{0}^{T} \l| \Delta m_n(t) \r|_{L^2}^2 \, dt \leq C.
		\end{equation}
	\end{lemma}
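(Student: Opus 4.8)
The plan is to apply the It\^o formula to the functional $\psi(v) = \tfrac12 |\nabla v|_{L^2}^2$ along $m_n$, as indicated before the statement, working at the level of the approximating (Faedo--Galerkin) system so that all pointwise products are legitimate, and then passing to the limit; the passage, together with the infinite-dimensional It\^o formula with jumps, is justified via Gy\"ongy--Krylov and the cited works. Since $\psi$ is quadratic with derivative $\psi'(v) = Av = -\Delta v$, the expansion produces, besides a martingale, the drift pairings $\langle A m_n, X \rangle_{L^2}$ with each term $X$ of the drift of \eqref{eqn relaxed controlled problem considered Marcus Form}, together with the compensator corrections coming from $b$ and $G$.

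First I would isolate the two structural cancellations that make the estimate work. The gyromagnetic term contributes $\langle A m_n, m_n \times \Delta m_n \rangle_{L^2} = -\int_{\mathcal{O}} \Delta m_n \cdot (m_n \times \Delta m_n) \, dx = 0$, since the scalar triple product $\Delta m_n \cdot (m_n \times \Delta m_n) = m_n \cdot (\Delta m_n \times \Delta m_n)$ has a repeated vector and vanishes pointwise. The polynomial term contributes $\langle A m_n, -(1+|m_n|_{\mathbb{R}^3}^2)m_n \rangle_{L^2} = \langle \Delta m_n, (1+|m_n|_{\mathbb{R}^3}^2)m_n \rangle_{L^2}$, which after integration by parts (the Neumann condition removes the boundary term) equals $-\int (1+|m_n|_{\mathbb{R}^3}^2)|\nabla m_n|^2 \, dx - 2\int (m_n \cdot \nabla m_n)^2 \, dx \leq 0$ and may be discarded. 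The exchange term gives the good dissipation $\langle A m_n, \Delta m_n \rangle_{L^2} = -|\Delta m_n|_{L^2}^2$, which I keep on the left. The energy inequality thus reduces to
\begin{align*}
\tfrac12 |\nabla m_n(t)|_{L^2}^2 + \int_0^t |\Delta m_n(s)|_{L^2}^2 \, ds
\leq{} & \tfrac12 |\nabla m_0|_{L^2}^2
+ \int_0^t \int_{\mathbb{U}} \langle A m_n(s), L(m_n(s),v) \rangle_{L^2} \, \lambda_n(dv,ds) \\
& + \int_0^t \langle A m_n(s), b(m_n(s)) \rangle_{L^2} \, ds
+ \tfrac12 \int_0^t \int_B |\nabla G(l,m_n(s))|_{L^2}^2 \, \nu(dl) \, ds + M_n(t),
\end{align*}
where $M_n$ is the compensated jump integral.

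Next I would estimate the remaining terms. For the $b$ and $G$ contributions I would use the $H^1$ linear growth of Lemma \ref{lemma linear growth Phi} (hence of $G$ and $b$ in $H^1$) and integration by parts to bound them by $C \int_0^t (1 + |m_n(s)|_{H^1}^2) \, ds$, with no appeal to the dissipation. For the martingale $M_n$, whose jumps are $\psi(m_n + G) - \psi(m_n) = \langle \nabla m_n, \nabla G \rangle_{L^2} + \tfrac12 |\nabla G|_{L^2}^2$, I would apply the Burkholder--Davis--Gundy inequality for jump integrals, control the predictable quadratic variation again by the $H^1$ growth of $G$, and use the standard Young splitting to absorb a factor $\tfrac12 \mathbb{E}^n \sup_{s \leq t} |\nabla m_n(s)|_{L^2}^2$ into the left-hand side. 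The control-specific term satisfies $\langle A m_n, L(m_n,v) \rangle_{L^2} \leq |\Delta m_n|_{L^2} \, |L(m_n,v)|_{L^2}$; invoking \eqref{eqn assumption on the operator L} and Young's inequality gives $\varepsilon |\Delta m_n|_{L^2}^2 + C_\varepsilon |m_n|_{L^2}^{2r} \kappa^2(\cdot,v)$, where the first summand is absorbed by the dissipation and the second is integrated against the Young measure $\lambda_n$.

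The main obstacle is precisely this last term: to bound $\mathbb{E}^n \int_0^t \int_{\mathbb{U}} |m_n|_{L^2}^{2r} \kappa^2(\cdot,v) \, \lambda_n(dv,ds)$ uniformly in $n$, I would use the Young bound $\kappa^2 \leq \tfrac12(1 + \kappa^4)$, the probability (Young-measure) structure $\lambda_n(dv,ds) = \lambda_n^s(dv)\,ds$, the coercivity assumption \eqref{eqn coercivity assumption} together with the uniform cost bound $\sup_n \mathscr{J}(\pi_n) < \infty$ from \eqref{eqn pi n is a minimizing sequence} to control $\mathbb{E}^n \int_0^T \int_{\mathbb{U}} \kappa^4 \, \lambda_n$, and the lower-order bounds of Lemma \ref{lemma bounds 1} (with their higher-moment analogues, proved by the same It\^o argument) to control the moments of $|m_n|_{L^2}^{2r}$ -- this is where the hypothesis $r < 2$ enters, keeping the power of $|m_n|_{L^2}$ subcritical for the Gronwall step. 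After collecting all estimates, taking $\varepsilon$ small and applying the Gronwall lemma to $t \mapsto \mathbb{E}^n \sup_{s \leq t} |\nabla m_n(s)|_{L^2}^2$ yields \eqref{eqn L infinity H1 bound mn}; feeding this back into the retained dissipation term gives \eqref{eqn L 2 H2 bound mn}, after noting that under the Neumann boundary condition the quantity $|\Delta m_n|_{L^2}$ together with the $L^2$-bound controls the full $H^2$-norm by elliptic regularity.
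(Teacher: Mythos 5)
Your proposal is correct and follows essentially the same route the paper indicates: the It\^o formula applied to $v\mapsto\tfrac12|\nabla v|_{L^2}^2$ (via Gy\"ongy--Krylov for the infinite-dimensional jump setting), the vanishing of the triple-product term, the sign of the polynomial term, absorption of the control term into the dissipation using \eqref{eqn assumption on the operator L}, the coercivity assumption \eqref{eqn coercivity assumption} with the uniform cost bound, and Gronwall. The paper itself only sketches this argument and defers the details to the cited references, so your write-up is a faithful (and somewhat more explicit) version of the intended proof.
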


	\subsection{Tightness of laws and subsequent convergence results}

 	Let us fix a notation. For $\beta\geq 0$, $p\geq 1$ and $1\leq q \leq 6$, let
	\begin{equation}\label{eqn definition of the space ZT}
		\mathcal{Z}_T = \mathbb{D}([0,T]:X^{-\beta})\cap L^p(0,T:L^q)\cap\mathbb{D}([0,T]:H^1_{\text{w}})\cap L^2_{\text{w}}(0,T:H^2).
	\end{equation}
	
	We endow $\mathcal{Z}_T$ with the smallest topology under which the inclusion maps (for the intersection into all the spaces) are continuous. For the definitions of the above spaces, see Section \ref{section some spaces}.

 We state a few definitions and results, particular cases of which will be used for obtaining tightness for the laws of $m_n$ on the space $\mathcal{Z}_T$, along with Lemma \ref{Lemma general tightness 3} which is borrowed from \cite{ZB+UM_WeakSolutionSLLGE_JumpNoise}. For more details, we refer the reader to Aldous \cite{Aldous_1978_Stopping_times_and_tightness,Aldous_1989_StoppingTimes2}, Motyl \cite{Motyl_2013_SNSE_LevyNoise}, M\'etivier \cite{Metivier_SPDE_InfDimensions_Book}, Parthasarathy \cite{Parthasarathy_1967_Book_ProbabilityMeasuresOnMetricSpaces_Book}.	
	We now state the Aldous condition for tightness from \cite{Metivier_SPDE_InfDimensions_Book}.
	\begin{definition}[Aldous Condition]\label{definition Aldous Condition}
		Let $\{ X_n \}_{n\in\mathbb{N}}$ be a sequence of c\'adl\'ag, adapted stochastic processes in a Banach space $E$. We say that the sequence $\{ X_n \}_{n\in\mathbb{N}}$ satisfies the Aldous condition on $E$ if for every $\varepsilon>0$ and $\eta>0$ there is $\delta>0$ such that for every sequence $\l\{ \tau_n\r\}_{n\in\mathbb{N}}$ of $\mathbb{F}$-stopping times with $\tau_n + \theta \leq T$ one has
		\begin{equation}
			\sup_{n\in\mathbb{N}} \sup_{0\leq \theta \leq \delta} \mathbb{P} \bigl\{ \| X_n\l( \tau_n + \theta \r) - X_n\l( \tau_n \r) \|_{E} \geq \eta \bigr\} \leq \varepsilon.
		\end{equation}
	\end{definition}	
	We now state a result (see M\'etivier \cite{Metivier_SPDE_InfDimensions_Book}, Motyl \cite{Motyl_2013_SNSE_LevyNoise}), a condition which guarantees that the sequence $\{ X_n \}_{n\in\mathbb{N}}$ satisfies the Aldous condition (as in Definition \ref{definition Aldous Condition}) in a separable Banach space.
	\begin{lemma}\label{Lemma alternate definition Aldous Condition}
		Let $E$ be a separable Banach space. Let $\{ X_n \}_{n\in\mathbb{N}}$ be a sequence of $E$-valued random variables. Assume that for every sequence $\l\{ \tau_n\r\}_{n\in\mathbb{N}}$ of $\mathbb{F}$-stopping times with $\tau_n \leq T$, and $\theta \geq 0$, with $\tau_n+\theta \leq T$, we have
		\begin{equation}
			\mathbb{E}\biggl[ \bigl\| X_n(\tau_n + \theta) - X_n(\tau_n) \bigr\|_{E}^{\gamma_1} \biggr] \leq C \theta^{\gamma_2},
		\end{equation}
		for some $\gamma_1,\gamma_2>0$ and some constant $C>0$. Then the sequence $\l\{ X_n \r\}_{n\in\mathbb{N}}$ satisfies the Aldous condition in $E$.
	\end{lemma}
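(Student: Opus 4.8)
The plan is to obtain the Aldous condition directly from the hypothesis by means of the Chebyshev--Markov inequality, which converts the $\gamma_1$-th moment estimate into a tail-probability estimate. No compactness, martingale structure, or the separability of $E$ is needed for this particular implication; it is an elementary consequence of the given moment bound, and the whole point is to verify that the resulting bound is uniform in $n$ and in $\theta$.

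First I would fix arbitrary $\varepsilon>0$ and $\eta>0$, together with an arbitrary sequence $\{\tau_n\}_{n\in\mathbb{N}}$ of $\mathbb{F}$-stopping times satisfying $\tau_n+\theta\leq T$. For each fixed $n$ and each admissible $\theta\geq 0$, the monotonicity of the map $x\mapsto x^{\gamma_1}$ on $[0,\infty)$ (valid since $\gamma_1>0$) identifies the event $\bigl\{\|X_n(\tau_n+\theta)-X_n(\tau_n)\|_E\geq\eta\bigr\}$ with $\bigl\{\|X_n(\tau_n+\theta)-X_n(\tau_n)\|_E^{\gamma_1}\geq\eta^{\gamma_1}\bigr\}$. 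Applying Markov's inequality and then invoking the hypothesis yields
\begin{equation}
	\mathbb{P}\bigl\{ \| X_n(\tau_n+\theta) - X_n(\tau_n) \|_E \geq \eta \bigr\}
	\leq \frac{ \mathbb{E}\bigl[ \| X_n(\tau_n+\theta) - X_n(\tau_n) \|_E^{\gamma_1} \bigr] }{ \eta^{\gamma_1} }
	\leq \frac{ C\, \theta^{\gamma_2} }{ \eta^{\gamma_1} }.
\end{equation}

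Since $\gamma_2>0$, the right-hand side is nondecreasing in $\theta$, so for every $\theta\in[0,\delta]$ it is dominated by $C\delta^{\gamma_2}/\eta^{\gamma_1}$; crucially, this bound involves only the constant $C$ and the exponents $\gamma_1,\gamma_2$ from the hypothesis, none of which depend on $n$ or on the choice of stopping times, so the estimate is uniform in both $n$ and $\theta$. It then suffices to choose
\begin{equation}
	\delta = \biggl( \frac{ \varepsilon\, \eta^{\gamma_1} }{ C } \biggr)^{1/\gamma_2}
\end{equation}
(or any smaller positive number), which forces $\sup_{n\in\mathbb{N}}\sup_{0\leq\theta\leq\delta}\mathbb{P}\bigl\{\|X_n(\tau_n+\theta)-X_n(\tau_n)\|_E\geq\eta\bigr\}\leq\varepsilon$, exactly the Aldous condition of Definition \ref{definition Aldous Condition}. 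There is no genuine obstacle in this argument; the only point requiring attention is the uniformity just noted, which is automatic because the hypothesis is posed with constants independent of $n$ and of the stopping-time sequence. The real work of the paper lies instead in verifying, for the processes $m_n$ arising from \eqref{eqn relaxed controlled problem considered Marcus Form}, that a bound of the assumed form actually holds, which is where the energy estimates of Lemma \ref{lemma bounds 1} and Lemma \ref{lemma bounds 2} and the structure of the Marcus-form integrands will be used.
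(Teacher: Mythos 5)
Your proof is correct, and it is exactly the standard Chebyshev--Markov argument: the paper itself does not prove this lemma but simply cites M\'etivier and Motyl, where the same computation (bound the tail probability by $C\theta^{\gamma_2}/\eta^{\gamma_1}$ and choose $\delta=(\varepsilon\eta^{\gamma_1}/C)^{1/\gamma_2}$) appears. Nothing further is needed.
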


  We again skip the proofs of the following result and refer the reader to \cite{ZB+UM_SLLGE_JumpNoise}, for example, for similar results. The reader can also refer to the preprints \cite{ZB+UM+Zhai_Preprint_LDP_LLGE_JumpNoise,UM+SG_2022Preprint_SLLBE_LDP}.
	
	\begin{lemma}\label{lemma Aldous Condition for mn}
		The sequence $\{ m_n \}_{n\in\mathbb{N}}$ satisfies the Aldous condition on the space $X^{-\beta}$ for $\beta > \frac{1}{4}$.
	\end{lemma}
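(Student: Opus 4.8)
The plan is to verify the moment form of the Aldous condition supplied by Lemma \ref{Lemma alternate definition Aldous Condition}, taking $E = X^{-\beta}$, which is a separable Hilbert space as the dual of the separable Hilbert space $X^\beta$. Fixing a sequence $\{\tau_n\}$ of stopping times with $\tau_n \leq T$ and $\theta \geq 0$ with $\tau_n + \theta \leq T$, I would write the increment $m_n(\tau_n + \theta) - m_n(\tau_n)$ as the sum of the increments of the five drift integrals and the single stochastic integral in \eqref{eqn relaxed controlled problem considered Marcus Form}, and estimate each summand separately in the $X^{-\beta}$ norm. Throughout I would use the continuous embedding $L^2 = X^0 \embed X^{-\beta}$ together with the uniform energy bounds of Lemma \ref{lemma bounds 1} and Lemma \ref{lemma bounds 2}.

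For the drift terms I would argue as follows. The Laplacian increment $\int_{\tau_n}^{\tau_n+\theta}\Delta m_n(s)\,ds$ is controlled, via $L^2\embed X^{-\beta}$ and Cauchy--Schwarz in time, by $\theta^{1/2}(\int_{\tau_n}^{\tau_n+\theta}|\Delta m_n|_{L^2}^2\,ds)^{1/2}$, whose second moment is $\leq C\theta$ by \eqref{eqn L 2 H2 bound mn}. The decisive term is the gyromagnetic one, $\int_{\tau_n}^{\tau_n+\theta}m_n(s)\times\Delta m_n(s)\,ds$: the integrand only lies in $L^{3/2}$, since by Hölder $|m_n\times\Delta m_n|_{L^{3/2}}\leq |m_n|_{L^6}|\Delta m_n|_{L^2}\leq C|m_n|_{H^1}|\Delta m_n|_{L^2}$ using $H^1\embed L^6$ for $d\leq 3$. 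I would therefore invoke the embedding $L^{3/2}\embed X^{-\beta}$, which holds precisely because $X^\beta\embed L^3$ for $\beta>\tfrac14$ in dimension $d=3$; this is exactly the origin of the restriction $\beta>\tfrac14$. Cauchy--Schwarz in time and then in $\Omega$, with \eqref{eqn L infinity H1 bound mn} and \eqref{eqn L 2 H2 bound mn}, give a first-moment bound of order $\theta^{1/2}$. The cubic term $\int_{\tau_n}^{\tau_n+\theta}(1+|m_n|^2)m_n\,ds$ lies in $L^2\embed X^{-\beta}$, with $|(1+|m_n|^2)m_n|_{L^2}\leq C(|m_n|_{L^2}+|m_n|_{H^1}^3)$; since only a second moment of $\sup_t|m_n|_{H^1}$ is available, I would take the moment exponent $\gamma_1=\tfrac23$, for which subadditivity of $x\mapsto x^{2/3}$ and \eqref{eqn L infinity H1 bound mn} yield order $\theta^{2/3}$. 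The Marcus correction $\int_{\tau_n}^{\tau_n+\theta}b(m_n)\,ds$ is handled by the linear growth $|b(v)|_{L^2}\leq C(1+|v|_{L^2})$ coming from \eqref{eqn H linear growth} and finiteness of the relevant $\nu$-integral, giving order $\theta$.

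The control increment $\int_{\tau_n}^{\tau_n+\theta}\int_{\mathbb{U}}L(m_n(s),v)\,\lambda_n(dv,ds)$ needs care. Using \eqref{eqn assumption on the operator L}, $|L(m_n,v)|_{L^2}\leq C|m_n|_{L^2}^r\kappa(\cdot,v)$ with $r<2$, together with the elementary inequality $|m_n|_{L^2}^r\leq C(1+|m_n|_{L^2}^2)$, I would factor out $\sup_t(1+|m_n(t)|_{L^2}^2)$ and, since each disintegration $\lambda_{n,s}$ is a probability measure, apply Hölder against $\lambda_n$ to extract $\theta^{3/4}(\int_0^T\int_{\mathbb{U}}\kappa^4\,\lambda_n(dv,ds))^{1/4}$. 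Choosing the small exponent $\gamma_1=\tfrac12$ and Cauchy--Schwarz in $\Omega$, the bound \eqref{eqn L infinity L2 bound mn} and the integrability $\mathbb{E}\int_0^T\int_{\mathbb{U}}\kappa^4\,\lambda_n<\infty$ (a consequence of \eqref{eqn coercivity assumption}, \eqref{eqn assumption on kappa} and the uniform cost bound on the minimizing sequence) give order $\theta^{3/8}$; crucially this uses only the first moment of $\int\kappa^4\,\lambda_n$. For the jump term $\int_{\tau_n}^{\tau_n+\theta}\int_B G(l,m_n(s))\,\tilde\eta(dl,ds)$ I would apply the It\^o isometry for the compensated Poisson random measure (again through $L^2\embed X^{-\beta}$) and the linear growth \eqref{eqn G linear growth} to obtain $\mathbb{E}\|\cdot\|_{X^{-\beta}}^2\leq C\mathbb{E}\int_{\tau_n}^{\tau_n+\theta}\int_B(1+|m_n|_{L^2})^2\,\nu(dl)\,ds\leq C\theta$ by \eqref{eqn L infinity L2 bound mn}.

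Finally I would assemble the six estimates. Since $(\sum_i x_i)^{1/2}\leq\sum_i x_i^{1/2}$ for nonnegative $x_i$, and each increment obeys $\mathbb{E}\|\cdot\|_{X^{-\beta}}^{\gamma_1^{(i)}}\leq C\theta^{\gamma_2^{(i)}}$ with $\gamma_1^{(i)}\geq\tfrac12$, Jensen's inequality lowers every exponent to $\gamma_1=\tfrac12$ at the price of a still positive power of $\theta$, so that $\mathbb{E}\|m_n(\tau_n+\theta)-m_n(\tau_n)\|_{X^{-\beta}}^{1/2}\leq C\theta^{\gamma_2}$ for some $\gamma_2>0$, uniformly in $n$; Lemma \ref{Lemma alternate definition Aldous Condition} then yields the Aldous condition on $X^{-\beta}$. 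I expect the gyromagnetic term to be the main obstacle: its integrand is only $L^{3/2}$-valued, which forces the entire argument into a negative-order space and makes the Sobolev embedding $X^\beta\embed L^3$ in $d=3$ dictate the threshold $\beta>\tfrac14$. The secondary difficulty is the control term, where the mismatch between the growth exponent $r<2$ and the quartic coercivity of $\kappa$ has to be absorbed by choosing a sufficiently small moment exponent.
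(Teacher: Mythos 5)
The paper itself offers no proof of this lemma --- it is explicitly skipped with a reference to \cite{ZB+UM_SLLGE_JumpNoise} and the related preprints --- and your argument is precisely the standard one used there: decompose the increment $m_n(\tau_n+\theta)-m_n(\tau_n)$ into the drift and stochastic-integral pieces of \eqref{eqn relaxed controlled problem considered Marcus Form}, bound each in $X^{-\beta}$ via the energy estimates of Lemmas \ref{lemma bounds 1}--\ref{lemma bounds 2}, and invoke the moment criterion of Lemma \ref{Lemma alternate definition Aldous Condition}. Your treatment of the two genuinely delicate terms is right: the gyromagnetic term via $|m_n\times\Delta m_n|_{L^{3/2}}\leq |m_n|_{L^6}|\Delta m_n|_{L^2}$ and the duality $L^{3/2}\embed X^{-\beta}$ for $\beta>\tfrac14$ (which is indeed where the threshold comes from), and the control term via H\"older against $\lambda_n$ and the uniform bound on $\mathbb{E}\int_0^T\int_{\mathbb{U}}\kappa^4\,\lambda_n$ supplied by \eqref{eqn coercivity assumption} and the minimizing property; the final reduction of all summands to a common exponent $\gamma_1=\tfrac12$ is also a legitimate way to fit the statement of Lemma \ref{Lemma alternate definition Aldous Condition}. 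The only point deserving a caveat is the jump and Marcus-correction terms: as stated in Lemma \ref{lemma linear growth Lipschitz G and H} the bounds \eqref{eqn G linear growth} and \eqref{eqn H linear growth} carry no factor of $l$, so your estimates $\int_B(1+|m_n|_{L^2})^2\,\nu(dl)\leq C(1+|m_n|_{L^2})^2$ implicitly require $\nu(B)<\infty$, which can fail for a general L\'evy measure; in the cited references these bounds are proved with an extra factor $|l|$ (respectively $|l|^2$ for $H$), which is what actually makes $\int_B|G(l,\cdot)|^2\,\nu(dl)$ and $b$ finite, and your argument goes through verbatim once that refinement is used.
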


For $p,q\geq 1$ and $\beta\geq 0$, let us define
	\begin{equation}
		Z_T : = \mathbb{D}([0,T]:X^{-\beta})\cap L^p(0,T:L^q)\cap\mathbb{D}([0,T]:H^1_{\text{w}}).
	\end{equation}
	The space is equipped with the Borel $\sigma$-algebra (generated by the open sets in the locally convex topology of $Z_T$).
 For proof of the following Lemma \ref{Lemma general tightness 3}, we refer the reader to Proposition 5.11, \cite{ZB+UM_WeakSolutionSLLGE_JumpNoise}.
 \begin{lemma}\label{Lemma general tightness 3}
		Let $\l\{X_n\r\}_{n\in\mathbb{N}}$ be a sequence of c\'adl\'ag adapted $X^{-\beta}$-valued processes satisfying
		\begin{enumerate}
			\item[(a)] 
			\begin{equation}
				\sup_{n\in\mathbb{N}} \mathbb{E}\l[ 
				\l| X_n \r|_{L^{\infty}(0,T:H^1)}^2 \r] < \infty,
			\end{equation}
			\item[(b)] the Aldous condition in $X^{-\beta}$.
		\end{enumerate}
		Then the sequence $\l\{\mathbb{P}^{X_n}\r\}_{n\in\mathbb{N}}$ is tight in $Z_T$. That is for given $\varepsilon>0$, there exists a compact set $K_{\varepsilon} \subset Z_T$ with
		\begin{equation}
			\mathbb{P}^{X_n}(K_{\varepsilon}) \geq 1 - \varepsilon,
		\end{equation}
		for all $n\in\mathbb{N}$.
	\end{lemma}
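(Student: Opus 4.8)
The plan is to deduce tightness from a purely deterministic compactness criterion in $Z_T$, applied to a family of sublevel sets whose probability we can bound uniformly in $n$ using the two hypotheses. The guiding principle (as in Motyl \cite{Motyl_2013_SNSE_LevyNoise} and M\'etivier \cite{Metivier_SPDE_InfDimensions_Book}) is that a uniform bound in the strong space $L^\infty(0,T:H^1)$ together with equicontinuity measured in the weaker space $X^{-\beta}$ forces relative compactness in the intersection topology of $Z_T$; the role of hypothesis (a) is to supply the former, and of hypothesis (b) to supply the latter in probability.

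First I would record the compact embeddings that drive the scheme. Since $A$ is the Neumann Laplacian on a bounded domain, it has compact resolvent, so $X^{s}\embed X^{s'}$ is compact whenever $s>s'$; in particular $H^1 \embed X^{-\beta}$ is compact for every $\beta\geq 0$. Moreover, since $d\leq 3$, the Rellich--Kondrachov theorem gives the compact embedding $H^1 \embed L^q$ for $1\leq q<6$. With these in hand I would invoke the deterministic compactness criterion for $Z_T$: a set $K\subset Z_T$ is relatively compact provided (i) $\sup_{u\in K}\sup_{t\in[0,T]}\l| u(t)\r|_{H^1}<\infty$, and (ii) $\lim_{\delta\to 0}\sup_{u\in K} w_{[0,T],X^{-\beta}}(u,\delta)=0$, where $w_{[0,T],X^{-\beta}}$ is the c\`adl\`ag (Skorohod) modulus in $X^{-\beta}$. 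Condition (i) together with Banach--Alaoglu yields relative compactness in $\mathbb{D}([0,T]:H^1_{\text{w}})$, while (i) and (ii) together — through the compact embeddings above and an Aubin--Lions/Dubinsky type interpolation between $X^{-\beta}$ and $H^1$ — yield relative compactness both in $L^p(0,T:L^q)$ and in $\mathbb{D}([0,T]:X^{-\beta})$. Passing to the intersection topology then gives compactness in $Z_T$.

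With the criterion available, I would construct $K_\varepsilon$. Fix $\varepsilon>0$. By hypothesis (a) and the Chebyshev inequality, choose $R>0$ with $\sup_n \mathbb{P}\bigl(\l| X_n\r|_{L^\infty(0,T:H^1)}>R\bigr)\leq \varepsilon/2$, which controls (i). For (ii), I would translate the Aldous condition into a probabilistic bound on the modulus: for each $k\in\mathbb{N}$ there is $\delta_k>0$ with $\sup_n \mathbb{P}\bigl(w_{[0,T],X^{-\beta}}(X_n,\delta_k)>1/k\bigr)\leq \varepsilon\,2^{-(k+1)}$. Setting
\begin{equation*}
K_\varepsilon = \Bigl\{ u\in Z_T : \l| u\r|_{L^\infty(0,T:H^1)}\leq R,\ w_{[0,T],X^{-\beta}}(u,\delta_k)\leq 1/k\ \text{for all } k \Bigr\},
\end{equation*}
the criterion shows $\overline{K_\varepsilon}$ is compact in $Z_T$, and a union bound over the complementary events gives $\mathbb{P}^{X_n}(K_\varepsilon)\geq 1-\varepsilon$ for all $n$, which is the assertion.

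I expect the main obstacle to be the step converting the Aldous condition — stated for increments over stopping-time windows $[\tau_n,\tau_n+\theta]$ — into uniform-in-$n$ smallness of the Skorohod modulus $w_{[0,T],X^{-\beta}}(X_n,\delta_k)$. This is precisely the c\`adl\`ag analogue of Aldous's tightness theorem and rests on the quantitative estimates of Aldous \cite{Aldous_1978_Stopping_times_and_tightness} and M\'etivier \cite{Metivier_SPDE_InfDimensions_Book}; care is needed because in $\mathbb{D}$ the relevant modulus must be the Skorohod one, which permits a single jump inside each small window, rather than the uniform modulus. A secondary technical point is the rigorous justification of the deterministic criterion itself, in particular extracting strong $L^p(0,T:L^q)$ compactness from the $H^1$ bound together with the weak equicontinuity. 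Both follow the established template, and, as noted, the full statement is available as Proposition 5.11 in \cite{ZB+UM_WeakSolutionSLLGE_JumpNoise}.
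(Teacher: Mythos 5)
Your proposal is correct and follows exactly the route the paper relies on: the paper gives no proof of this lemma but defers to Proposition 5.11 of the cited reference, and your sketch (Chebyshev on the uniform $L^{\infty}(0,T:H^1)$ bound, Aldous condition converted to uniform smallness of the Skorohod modulus in $X^{-\beta}$, then the deterministic compactness criterion in the intersection topology via the compact embeddings $H^1\embed X^{-\beta}$ and $H^1\embed L^q$ for $q<6$) is precisely the standard argument behind that proposition. The only point worth flagging is that the strong $L^p(0,T:L^q)$ compactness genuinely requires $q<6$ when $d=3$, whereas the space $Z_T$ is stated for general $q\geq 1$; this is harmless here since the paper later fixes $p=q=4$.
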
	
	Using the bounds established in Lemma \ref{lemma bounds 1}, Lemma \ref{lemma bounds 2} along with Lemma \ref{Lemma general tightness 3} and the compact embedding of the space $L^2(0,T:H^2)$ into the space $L_{\text{w}}^2(0,T:H^2)$, we have the following lemma.
 \begin{lemma}\label{lemma tightness lemma}
		The sequence of laws $\l\{\mathcal{L}(m_n)\r\}_{n\in\mathbb{N}}$ is tight on the space $\mathcal{Z}_T$.
	\end{lemma}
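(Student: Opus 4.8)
The plan is to exploit the factorization $\mathcal{Z}_T = Z_T \cap L^2_{\text{w}}(0,T:H^2)$, establish tightness of $\{\mathcal{L}(m_n)\}_{n\in\mathbb{N}}$ on each of the two factors separately, and then recombine the resulting compact sets in the intersection topology. First I would verify the hypotheses of Lemma \ref{Lemma general tightness 3} for the sequence $\{m_n\}_{n\in\mathbb{N}}$: its hypothesis (a) is precisely the uniform bound \eqref{eqn L infinity H1 bound mn} furnished by Lemma \ref{lemma bounds 2}, while hypothesis (b), the Aldous condition in $X^{-\beta}$ for $\beta>\tfrac14$, is exactly the content of Lemma \ref{lemma Aldous Condition for mn}. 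Lemma \ref{Lemma general tightness 3} then yields that $\{\mathcal{L}(m_n)\}_{n\in\mathbb{N}}$ is tight on $Z_T$; concretely, for each $\varepsilon>0$ there is a compact set $K^1_\varepsilon \subseteq Z_T$ with $\mathbb{P}^n\{m_n \in K^1_\varepsilon\} \geq 1-\varepsilon/2$ for all $n$.

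Next I would handle the remaining factor $L^2_{\text{w}}(0,T:H^2)$. Combining the estimate \eqref{eqn L 2 H2 bound mn} on $\mathbb{E}^n\int_0^T |\Delta m_n(t)|_{L^2}^2\,dt$ with the $L^\infty(0,T:H^1)$ bound \eqref{eqn L infinity H1 bound mn} (which, after integrating in time, controls $\mathbb{E}^n\int_0^T|m_n(t)|_{H^1}^2\,dt \leq T\,\mathbb{E}^n\sup_{t}|m_n(t)|_{H^1}^2 \leq C$) and elliptic regularity for the Neumann Laplacian from \eqref{Definition of Neumann Laplacian}, namely $|v|_{H^2}^2 \lesssim |\Delta v|_{L^2}^2 + |v|_{L^2}^2$ for $v\in D(A)$, I obtain a uniform bound $\sup_{n}\mathbb{E}^n|m_n|_{L^2(0,T:H^2)}^2 \leq C$. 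Since $L^2(0,T:H^2)$ is a separable Hilbert space, its closed balls are weakly compact, hence compact in $L^2_{\text{w}}(0,T:H^2)$; thus, by the Chebyshev inequality, choosing $R>0$ sufficiently large makes the closed ball $\bar{B}_R \subseteq L^2(0,T:H^2)$ satisfy $\mathbb{P}^n\{m_n \in \bar{B}_R\} \geq 1-\varepsilon/2$ for all $n$. This establishes tightness of $\{\mathcal{L}(m_n)\}$ on $L^2_{\text{w}}(0,T:H^2)$.

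Finally I would recombine the two estimates by setting $K_\varepsilon := K^1_\varepsilon \cap \bar{B}_R$, regarded as a subset of $\mathcal{Z}_T$; a union bound immediately gives $\mathbb{P}^n\{m_n \in K_\varepsilon\} \geq 1-\varepsilon$ for all $n$, so the only remaining task is to check that $K_\varepsilon$ is compact in the intersection topology of $\mathcal{Z}_T$. Given a sequence in $K_\varepsilon$, compactness of $K^1_\varepsilon$ in $Z_T$ lets me extract a subsequence converging in $Z_T$ to some $x\in K^1_\varepsilon$, and compactness of $\bar{B}_R$ in $L^2_{\text{w}}(0,T:H^2)$ lets me extract a further subsequence converging weakly in $L^2(0,T:H^2)$ to some $y\in\bar{B}_R$. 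I expect the identification $x=y$ to be the main, and really the only delicate, point: both modes of convergence project to convergence in a common coarser Hausdorff topology (for instance that of $L^2(0,T:X^{-\beta})$, or the topology of distributions), in which limits are unique, forcing $x=y\in K_\varepsilon$ and hence simultaneous convergence of the extracted subsequence in all four component spaces, i.e. in $\mathcal{Z}_T$. Because the relative topology on the compact set $K_\varepsilon$ is metrizable, sequential compactness is genuine compactness, so $K_\varepsilon$ is the required compact set and tightness on $\mathcal{Z}_T$ follows.
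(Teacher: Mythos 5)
Your proposal is correct and follows essentially the same route as the paper, which proves the lemma by combining Lemma \ref{Lemma general tightness 3} (via the bound \eqref{eqn L infinity H1 bound mn} and the Aldous condition of Lemma \ref{lemma Aldous Condition for mn}) with the energy estimates of Lemmas \ref{lemma bounds 1}--\ref{lemma bounds 2} and the weak compactness of bounded sets in $L^2(0,T:H^2)$. You simply spell out the details (elliptic regularity to pass from the $\Delta m_n$ bound to the full $H^2$ norm, the intersection of the two compact sets, and the identification of limits in a common coarser Hausdorff topology) that the paper leaves implicit.
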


	In order to talk about the convergence (of laws) for the pair $\l( m_n , \eta \r)$, we use a generalization of the Skorohod Theorem (see \cite{ZB+EH+Paul_2018_StochasticReactionDiffusion_JumpProcesses}). We use the said result to obtain another sequence of $\mathcal{Z}_T$-valued random variables (possibly on a different probability space). Let $\eta_n := \eta,n\in\mathbb{N}$. With this notation and Lemma \ref{lemma tightness lemma} and the Skorohod Theorem, we can conclude that the sequence $\l\{ \mathcal{L}\l(m_n,\eta_n\r)\r\}_{n\in\mathbb{N}}$ is tight on the space $\mathcal{Z}_T \times \mathbb{M}_{\bar{\mathbb{N}}}\l([0,T] \times B\r)$, where $\mathbb{M}_{\bar{\mathbb{N}}}(\mathcal{S})$ denotes all the $\mathbb{N}\cup\{\infty\}$-valued measures on the metric space $\l(\mathcal{S},\mathscr{\varrho}\r)$ (see for instance \cite{ZB+UM_WeakSolutionSLLGE_JumpNoise}).
	By applying a generalized Jakubowski-Skorohod Representation Theorem \cite{ZB+EH+Paul_2018_StochasticReactionDiffusion_JumpProcesses}, we have the following lemma.
 
 Using Theorem 2.13, \cite{ZB+RS}, the coercivity assumption \eqref{eqn coercivity assumption}, along with the uniform bounds on the cost, we conclude that the sequence of laws of $\l\{\lambda_n\r\}_{n\in\mathbb{N}}$ is tight on the space $\mathcal{Y}(0,T:\mathbb{U})$.

	\begin{lemma}\label{lemma use of Skorohod Theorem}
		There exists a probability space $$\l( \Omega^\p , \mathbb{F}^\p , \mathcal{F}^\p , \mathbb{P}^\p\r)$$ and another sequence of random variables on that space $\{ \m_n \}_{n\in\mathbb{N}}$, along with $\mathcal{Z}_T \times \mathbb{M}_{\bar{\mathbb{N}}}\l([0,T] \times B\r) \times \mathcal{Y}(0,T:\mathbb{U})$-valued random variables $\l(\m,\eta^\p,\lambda^\p\r) , \l(\m_n,\eta_n^\p,\lambda_n^\p\r),n\in\mathbb{N}$ on the aforementioned probability space, such that the following hold.
		\begin{enumerate}
			\item \begin{equation}
				\mathcal{L}\l( m_n,\eta_n,\lambda_n \r) = \mathcal{L}\l( \m_n,\eta_n^\p,\lambda_n^\p \r), \forall n\in\mathbb{N},\ \text{on}\ \mathcal{Z}_T \times \mathbb{M}_{\bar{\mathbb{N}}}\l([0,T] \times B\r) \times \mathcal{Y}(0,T:\mathbb{U}),
			\end{equation}
			\item \begin{equation}
				\l( \m_n,\eta_n^\p, \lambda_n^\p \r) \to \l( \m,\eta^\p, \lambda^\p  \r),\ \text{as}\ n\to\infty, \ \mathbb{P}^\p -a.s. \ \text{in}\ \mathcal{Z}_T \times \mathbb{M}_{\bar{\mathbb{N}}}\l([0,T] \times B\r) \times \mathcal{Y}(0,T:\mathbb{U}),
			\end{equation}
			\item \begin{equation}
				\eta_n^\p(\omega^\p) = \eta^\p(\omega^\p),\ \text{for all} \  \omega^\p\in\Omega^\p.
			\end{equation}
			
		\end{enumerate}
	\end{lemma}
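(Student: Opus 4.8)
The plan is to deduce the statement from a Skorohod-type representation theorem once joint tightness of $\l\{\mathcal{L}(m_n,\eta_n,\lambda_n)\r\}_{n\in\mathbb{N}}$ on the product space has been established. First I would collect tightness of the three marginals. Lemma \ref{lemma tightness lemma} already provides tightness of $\l\{\mathcal{L}(m_n)\r\}_{n\in\mathbb{N}}$ on $\mathcal{Z}_T$. Since $\eta_n := \eta$ is independent of $n$, the sequence $\l\{\mathcal{L}(\eta_n)\r\}_{n\in\mathbb{N}}$ is constant and hence trivially tight on $\mathbb{M}_{\bar{\mathbb{N}}}([0,T]\times B)$. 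Tightness of $\l\{\mathcal{L}(\lambda_n)\r\}_{n\in\mathbb{N}}$ on $\mathcal{Y}(0,T:\mathbb{U})$ was noted above; it follows from the inf-compactness of $\kappa$ and the coercivity assumption \eqref{eqn coercivity assumption}, which together with the uniform cost bound implied by \eqref{eqn pi n is a minimizing sequence} yield a uniform bound on $\mathbb{E}\int_0^T\int_{\mathbb{U}}\kappa^4(s,v)\,\lambda_n(dv,ds)$, whence Theorem 2.13 of \cite{ZB+RS} applies. Since tightness of each marginal forces tightness of the joint law on a finite product (for each $\varepsilon$ pick a compact set in each factor carrying mass at least $1-\varepsilon/3$ and take their product), the family $\l\{ \mathcal{L}(m_n,\eta_n,\lambda_n) \r\}_{n\in\mathbb{N}}$ is tight on $\mathcal{Z}_T \times \mathbb{M}_{\bar{\mathbb{N}}}([0,T]\times B) \times \mathcal{Y}(0,T:\mathbb{U})$.

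The crux is converting tightness into almost sure convergence on a single probability space. The difficulty is that $\mathcal{Z}_T$ is not metrizable: it contains the factors $\mathbb{D}([0,T]:H^1_{\text{w}})$ and $L^2_{\text{w}}(0,T:H^2)$ carrying weak topologies, so the classical Skorohod representation theorem is unavailable. I would therefore invoke the generalized Jakubowski--Skorohod representation theorem in the form of \cite{ZB+EH+Paul_2018_StochasticReactionDiffusion_JumpProcesses}, which requires only that each factor admit a countable family of continuous real-valued functions separating its points. Each constituent space meets this requirement: $\mathbb{D}([0,T]:X^{-\beta})$ and $L^p(0,T:L^q)$ are Polish; the weak spaces $\mathbb{D}([0,T]:H^1_{\text{w}})$ and $L^2_{\text{w}}(0,T:H^2)$ admit separating families through the duality pairings of the reflexive separable spaces $H^1$ and $L^2(0,T:H^2)$; and the spaces $\mathbb{M}_{\bar{\mathbb{N}}}([0,T]\times B)$ and $\mathcal{Y}(0,T:\mathbb{U})$ are known to be of this type. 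The theorem then yields the probability space $\l( \Omega^\p , \mathbb{F}^\p , \mathcal{F}^\p , \mathbb{P}^\p\r)$, the copies $(\m_n,\eta_n^\p,\lambda_n^\p)$ with the laws asserted in item (1), and the almost sure limit $(\m,\eta^\p,\lambda^\p)$ of item (2), the filtration $\mathbb{F}^\p$ being taken as the usual augmentation of the natural filtration generated by these processes.

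The genuinely nonstandard point, which I expect to be the main obstacle, is item (3): arranging that $\eta_n^\p(\omega^\p) = \eta^\p(\omega^\p)$ holds for \emph{every} $\omega^\p$, rather than merely $\eta_n^\p \to \eta^\p$ almost surely. A bare application of the representation theorem delivers only the latter. To upgrade it I would exploit that in the original sequence the noise $\eta_n = \eta$ does not depend on $n$ and is a time-homogeneous Poisson random measure with the fixed intensity $\text{Leb}\otimes\nu$, and use the jump-noise version of the Skorohod theorem from \cite{ZB+EH+Paul_2018_StochasticReactionDiffusion_JumpProcesses} (see also \cite{ZB+UM_WeakSolutionSLLGE_JumpNoise}), which is built so that a Poisson random measure is transported without modification: the reconstructed measures coincide with a single $\eta^\p$ that is again a time-homogeneous Poisson random measure with intensity $\text{Leb}\otimes\nu$. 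Checking that the hypotheses of that version apply to the present product configuration is the delicate step, and securing the pointwise identity in item (3) is precisely what will later permit passage to the limit inside the compensated stochastic integral in \eqref{eqn weak martingale solution definition eqn weak form}.
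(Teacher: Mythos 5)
Your proposal is correct and follows essentially the same route as the paper: tightness of the three marginal laws (Lemma \ref{lemma tightness lemma} for $m_n$, triviality for the constant sequence $\eta_n=\eta$, and Theorem 2.13 of \cite{ZB+RS} with the coercivity assumption \eqref{eqn coercivity assumption} and the uniform cost bound for $\lambda_n$), hence joint tightness on the product, followed by the generalized Jakubowski--Skorohod representation theorem of \cite{ZB+EH+Paul_2018_StochasticReactionDiffusion_JumpProcesses} to handle the non-metrizable factors of $\mathcal{Z}_T$ and to transport the Poisson random measure unchanged, which gives item (3). The paper states this only in outline, so your write-up is in effect a more detailed version of the same argument.
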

	
	For the remainder of the section, we fix $p = q = 4$ and $\beta = \frac{1}{2}$ for the space $\mathcal{Z}_T$. Note that $X^{-\frac{1}{2}}$ can be identified with the space $(H^1)^\p$, which is the dual of the space $H^1$.
	
	By appealing to Kuratowski Theorem , see Theorem 1.1 in \cite{Vakhania_Probability_distributions_on_Banach_spaces}, (see also \cite{ZB+Dhariwal_2012_SNSE_LevyNoise}), we conclude that the processes $\m_n$ and $m_n$ satisfy the same bounds. This is made precise in the following lemma.
	
	\begin{lemma}\label{lemma bounds 1 lemma mn prime}
		Let $p\geq 1$. Then, there exists a constant $C>0$, which can depend on $p$ but not on $n\in\mathbb{N}$ such that the following hold.
		\begin{equation}\label{eqn L infinity H1 bound mn prime}
			\mathbb{E}^\p \sup_{t\in[0,T]}\l| \m_n(t) \r|_{H^1}^{2p} \leq C,
		\end{equation}
		\begin{equation}\label{eqn L2 H2 bound mn prime}
			\mathbb{E}^\p \l( \int_{0}^{T} \l| \m_n(t) \r|_{H^2}^2 \, dt \r)^{p} \leq C.
		\end{equation}
            \begin{equation}\label{eqn L 4 L4 bound mn prime}
			\mathbb{E}^\p \int_{0}^{T} \l| \m_n(t) \r|_{L^4}^4 \, dt \leq C.
		\end{equation}
		Here $\mathbb{E}^\p$ denotes the expectation with respect to the probability space $\l(\Omega^\p,\mathbb{P}^\p\r)$.
	\end{lemma}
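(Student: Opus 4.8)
The plan is to transfer the uniform bounds already established for the original processes $m_n$ over to the new processes $\m_n$, exploiting the equality of laws $\mathcal{L}(m_n) = \mathcal{L}(\m_n)$ on $\mathcal{Z}_T$ supplied by Lemma \ref{lemma use of Skorohod Theorem}. The only genuinely new content beyond Lemma \ref{lemma bounds 1} and Lemma \ref{lemma bounds 2} is the passage to the higher moment $2p$ and the fact that these bounds survive the change of probability space.

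First I would observe that the It\^o-formula-plus-Gronwall scheme underlying Lemma \ref{lemma bounds 1} and Lemma \ref{lemma bounds 2} is not restricted to the second moment. Applying the It\^o formula to the maps $v\mapsto \l| v \r|_{L^2}^{2p}$ and $v\mapsto \l| \nabla v \r|_{L^2}^{2p}$, estimating the resulting jump stochastic integral by the Burkholder-Davis-Gundy inequality together with the linear growth bounds \eqref{eqn G linear growth} and \eqref{eqn H linear growth} on $G$ and $b$, and then invoking Gronwall's inequality, yields for every $p\geq 1$ the bounds
\begin{equation}
\mathbb{E}^n \sup_{t\in[0,T]} \l| m_n(t) \r|_{H^1}^{2p} \leq C, \quad \mathbb{E}^n \l( \int_0^T \l| m_n(t) \r|_{H^2}^2 \, dt \r)^p \leq C,
\end{equation}
uniformly in $n$, while the space-time $L^4$ bound \eqref{eqn L 4 L4 bound mn} is already at first moment and needs no strengthening.

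Next comes the transfer step. The three functionals
\begin{equation*}
v \mapsto \sup_{t\in[0,T]} \l| v(t) \r|_{H^1}^{2p}, \quad v \mapsto \l( \int_0^T \l| v(t) \r|_{H^2}^2 \, dt \r)^p, \quad v \mapsto \int_0^T \l| v(t) \r|_{L^4}^4 \, dt,
\end{equation*}
are a priori defined only on the strong subspaces $L^{\infty}(0,T:H^1)$, $L^2(0,T:H^2)$ and $L^4(0,T:L^4)$ sitting inside $\mathcal{Z}_T$, and I would extend each to all of $\mathcal{Z}_T$ by assigning the value $+\infty$ off the relevant subspace. Granting that each extended functional is Borel measurable with respect to the (partly weak, partly Skorohod) topology of $\mathcal{Z}_T$, the equality of laws immediately gives, for every such nonnegative Borel functional $\Psi$,
\begin{equation*}
\mathbb{E}^\p \l[ \Psi(\m_n) \r] = \mathbb{E}^n \l[ \Psi(m_n) \r],
\end{equation*}
and the right-hand sides are bounded uniformly in $n$ by the previous step. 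This delivers exactly \eqref{eqn L infinity H1 bound mn prime}, \eqref{eqn L2 H2 bound mn prime} and \eqref{eqn L 4 L4 bound mn prime}.

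I expect the main obstacle to be precisely the measurability claim just invoked. Because $\mathcal{Z}_T$ carries weak and Skorohod topologies rather than the norm topologies of $H^1$, $H^2$ and $L^4$, the functionals above are neither continuous nor evidently measurable on $\mathcal{Z}_T$, so one cannot push the bounds forward naively. This is where the Kuratowski theorem (Theorem 1.1 in \cite{Vakhania_Probability_distributions_on_Banach_spaces}) enters: the inclusion of each strong subspace into $\mathcal{Z}_T$ is a continuous injection between Polish spaces, hence by Kuratowski's theorem its image is a Borel subset of $\mathcal{Z}_T$; on that image the functionals are lower semicontinuous (each being a supremum, respectively an integral, of weakly lower semicontinuous seminorm evaluations), hence Borel on all of $\mathcal{Z}_T$. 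Verifying the hypotheses of Kuratowski's theorem, namely the Polish structure of the strong spaces and the continuity of the inclusions into $\mathcal{Z}_T$, is the technical heart of the argument; once it is in place, the transfer of every bound is automatic.
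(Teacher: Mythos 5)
Your proposal is correct and follows essentially the same route as the paper, which likewise transfers the uniform bounds from $m_n$ to $\m_n$ via the equality of laws from Lemma \ref{lemma use of Skorohod Theorem} together with the Kuratowski theorem (Theorem 1.1 in \cite{Vakhania_Probability_distributions_on_Banach_spaces}) to justify measurability of the strong-norm functionals on $\mathcal{Z}_T$. You in fact supply more detail than the paper does, both on the higher-moment It\^o--Burkholder--Davis--Gundy--Gronwall step for $m_n$ (which the paper leaves implicit, its stated Lemmas \ref{lemma bounds 1} and \ref{lemma bounds 2} covering only $p=1$) and on the Borel-measurability argument.
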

	Using the above bounds along with certain semicontinuity arguments, we can show the following bounds, in particular for $p=1$, for the limit process $\m$.
	\begin{lemma}\label{Lemma FG approximations bounds on m prime}
		There exists a constant $C>0$ such that the following hold.
		
		\begin{equation}\label{eqn L infinity H1 bound m prime}
			\mathbb{E}^\p \sup_{t\in[0,T]}\l| \m(t) \r|_{H^1}^{2} \leq C,
		\end{equation}
  
		\begin{equation}\label{eqn L2 H2 bound m prime}
			\mathbb{E}^\p \int_{0}^{T} \l| \m(t) \r|_{H^2}^2 \, dt  \leq C.
		\end{equation}
  \begin{equation}\label{eqn L 4 L4 bound m prime}
			\mathbb{E}^\p \int_{0}^{T} \l| \m(t) \r|_{L^4}^4 \, dt \leq C.
		\end{equation}
	\end{lemma}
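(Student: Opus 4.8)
The plan is to transfer the uniform bounds of Lemma \ref{lemma bounds 1 lemma mn prime} to the limit process $\m$ by combining the $\mathbb{P}^\p$-almost sure convergence $\m_n \to \m$ in $\mathcal{Z}_T$ furnished by Lemma \ref{lemma use of Skorohod Theorem} with the (weak or strong) lower semicontinuity of the relevant functionals, followed in each case by an application of Fatou's lemma. Recall that with the fixed choices $p=q=4$ and $\beta=\frac{1}{2}$, convergence in $\mathcal{Z}_T$ encodes simultaneously convergence in $\mathbb{D}([0,T]:H^1_{\text{w}})$, strong convergence in $L^4(0,T:L^4)$, and weak convergence in $L^2_{\text{w}}(0,T:H^2)$. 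Each of the three estimates will exploit one of these modes of convergence together with the corresponding $p=1$ bound from Lemma \ref{lemma bounds 1 lemma mn prime}.

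For \eqref{eqn L2 H2 bound m prime}, I would use that almost sure convergence in $L^2_{\text{w}}(0,T:H^2)$ means $\m_n \rightharpoonup \m$ weakly in the Hilbert space $L^2(0,T:H^2)$ for $\mathbb{P}^\p$-a.e. $\omega^\p$; since the norm of a Hilbert space is weakly lower semicontinuous,
\[
\int_{0}^{T} \l| \m(t) \r|_{H^2}^2 \, dt \leq \liminf_{n\to\infty} \int_{0}^{T} \l| \m_n(t) \r|_{H^2}^2 \, dt, \quad \mathbb{P}^\p\text{-a.s.}
\]
Taking $\mathbb{E}^\p$, then Fatou's lemma together with \eqref{eqn L2 H2 bound mn prime} at $p=1$, yields \eqref{eqn L2 H2 bound m prime}. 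For \eqref{eqn L 4 L4 bound m prime} the convergence $\m_n \to \m$ is \emph{strong} in $L^4(0,T:L^4)$, so $\int_0^T |\m_n(t)|_{L^4}^4\,dt \to \int_0^T |\m(t)|_{L^4}^4\,dt$ $\mathbb{P}^\p$-a.s.; Fatou's lemma and \eqref{eqn L 4 L4 bound mn prime} then give the claim directly.

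The main work is in \eqref{eqn L infinity H1 bound m prime}, because the supremum in time must be combined with only \emph{weak} convergence in $H^1$, and this is where I expect the only genuine obstacle. Here I would exploit that convergence in $\mathbb{D}([0,T]:H^1_{\text{w}})$ yields, for $\mathbb{P}^\p$-a.e. $\omega^\p$, weak convergence $\m_n(t) \rightharpoonup \m(t)$ in $H^1$ at every continuity point $t$ of $\m$ (the set of non-continuity points being at most countable). Weak lower semicontinuity of $\l|\cdot\r|_{H^1}$ gives, at each such $t$,
\[
\l| \m(t) \r|_{H^1}^2 \leq \liminf_{n\to\infty} \l| \m_n(t) \r|_{H^1}^2 \leq \liminf_{n\to\infty} \sup_{s\in[0,T]} \l| \m_n(s) \r|_{H^1}^2 .
\]
Taking the supremum over the dense set of continuity points and invoking the weak right-continuity of $\m$ then upgrades this to $\sup_{t}|\m(t)|_{H^1}^2 \leq \liminf_n \sup_s |\m_n(s)|_{H^1}^2$ $\mathbb{P}^\p$-a.s., after which Fatou's lemma and the bound \eqref{eqn L infinity H1 bound mn prime} at $p=1$ complete the argument. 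The delicate step is the passage from the supremum over continuity points to the full supremum, which relies essentially on the c\`adl\`ag structure of $\mathbb{D}([0,T]:H^1_{\text{w}})$; the higher moments $p>1$ in Lemma \ref{lemma bounds 1 lemma mn prime} are not required for these bounds, though they supply the uniform integrability needed when one later wishes to pass to the limit in expectation.
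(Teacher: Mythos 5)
Your proposal is correct and follows exactly the route the paper indicates: the paper gives no detailed proof, stating only that the bounds follow from Lemma \ref{lemma bounds 1 lemma mn prime} (with $p=1$) "along with certain semicontinuity arguments," which is precisely your combination of the $\mathbb{P}^\p$-a.s.\ convergence of $\m_n$ to $\m$ in the three component topologies of $\mathcal{Z}_T$, lower semicontinuity of the norms under weak (resp.\ strong) convergence, and Fatou's lemma. Your elaboration of the $\mathbb{D}([0,T]:H^1_{\text{w}})$ case, including the passage from continuity points to the full supremum, is a faithful and correct filling-in of the step the paper leaves implicit.
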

	
	\begin{lemma}\label{lemma convergence lemma 1 of FG approximates}
		The following convergences hold.
		\begin{enumerate}
			\item \begin{equation}\label{eqn L4L4L4 convergence mn prime}
				\m_n\to\m\ \text{in}\ L^4\l( \Omega^\p : L^4(0,T:L^4)\r).
			\end{equation}
			\item 
			
			\begin{equation}
				\m_n \to  \m\ \text{weakly in}\ L^2\l( \Omega^\p : L^2(0,T : H^2 )\r).
			\end{equation}
			
			\item For $q\in (1,\frac{4}{3})$,
			\begin{equation}
				\m_n \times \Delta \m_n \to \m \times \Delta \m\ \text{weakly in}\ L^q(\Omega^\p:L^q(0,T:L^2)),
			\end{equation}
			and for $\beta>\frac{1}{4}$,
			\begin{equation}
				\m_n \times \Delta \m_n \to \m \times \Delta \m\ \text{weakly in}\  L^2(\Omega^\p:L^2(0,T:X^{-\beta})).
			\end{equation}

			\item  Let $V\in L^4(\Omega^\p:(0,T:H^1))$. Then
			\begin{equation}
				\lim_{n\to\infty}\mathbb{E}^\p \int_{0}^{T} \l\langle  \l( 1 + \l| \m_n(s) \r|_{\mathbb{R}^3}^2 \r) \m_n(s)  
				-\l( 1 + \l| \m(s) \r|_{\mathbb{R}^3}^2 \r) \m(s)  , V(s) \r\rangle_{L^2} \, ds = 0.
			\end{equation}
			\end{enumerate}
	\end{lemma}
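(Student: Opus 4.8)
The plan is to derive all four convergences from a single input: the $\mathbb{P}^\p$-almost sure convergence $\m_n \to \m$ in $\mathcal{Z}_T$ supplied by Lemma \ref{lemma use of Skorohod Theorem} (in particular in $L^4(0,T:L^4)$ and in $L^2_{\text{w}}(0,T:H^2)$), combined with the uniform moment bounds of Lemma \ref{lemma bounds 1 lemma mn prime}. The moment bounds play two roles: they furnish uniform integrability, which upgrades almost sure convergence to Bochner $L^p(\Omega^\p)$-convergence, and they furnish boundedness in reflexive spaces, which yields weak compactness. For claim (1), the $\mathcal{Z}_T$-convergence with $p=q=4$ already gives $\int_0^T |\m_n(t)-\m(t)|_{L^4}^4\,dt \to 0$ almost surely, so it remains only to pass the expectation through the limit via the Vitali convergence theorem. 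The needed uniform integrability follows from $H^1 \embed L^4$ (valid for $d\le 3$) together with \eqref{eqn L infinity H1 bound mn prime}: since $\int_0^T|\m_n|_{L^4}^4\,dt \le C T\sup_{t}|\m_n(t)|_{H^1}^4$, whose square has uniformly bounded expectation on choosing $p=4$ in Lemma \ref{lemma bounds 1 lemma mn prime}, the family is bounded in $L^2(\Omega^\p)$ and hence uniformly integrable, giving \eqref{eqn L4L4L4 convergence mn prime}. For claim (2), the bound \eqref{eqn L2 H2 bound mn prime} shows $\{\m_n\}$ is bounded in the Hilbert space $L^2(\Omega^\p:L^2(0,T:H^2))$, hence weakly relatively compact there; every weak subsequential limit must coincide with $\m$ because the almost sure convergence in $L^2_{\text{w}}(0,T:H^2)$ identifies the limit, so the whole sequence converges weakly.

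Claim (3) is the crux and the main obstacle, since only $\Delta\m_n$ converges weakly whereas the term is nonlinear; the strategy is to exploit strong convergence of $\m_n$ against a uniformly bounded weakly convergent factor, and the exponent bookkeeping is exactly what pins down the ranges $q<\tfrac43$ and $\beta>\tfrac14$. For the first target, interpolating \eqref{eqn L infinity H1 bound mn prime} and \eqref{eqn L2 H2 bound mn prime} and using $H^s \embed L^\infty$ for $s>\tfrac{d}{2}$ gives $\m_n$ bounded in $L^{p}(0,T:L^\infty)$ for every $p<4$; Hölder in time against $\Delta\m_n \in L^2(0,T:L^2)$ then bounds $\m_n\times\Delta\m_n$ in $L^q(0,T:L^2)$ precisely for $q<\tfrac43$, with the $\Omega^\p$-moments controlled by the higher-order bounds of Lemma \ref{lemma bounds 1 lemma mn prime}. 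For the second target, the pairing with $\phi\in X^{\beta}$ together with $|\m_n\times\Delta\m_n|_{L^{3/2}} \le |\m_n|_{L^6}|\Delta\m_n|_{L^2}$ and $X^{\beta}\embed L^3$ (which holds exactly for $\beta>\tfrac14$) bounds $\m_n\times\Delta\m_n$ in $L^2(\Omega^\p:L^2(0,T:X^{-\beta}))$. In both cases boundedness in a reflexive space yields a weak limit; to identify it I would split $\m_n\times\Delta\m_n - \m\times\Delta\m = (\m_n-\m)\times\Delta\m_n + \m\times(\Delta\m_n-\Delta\m)$, rewrite each pairing against a test function using the cyclic identity for the triple product, and let the strong convergence of $\m_n$ from claim (1) dispatch the first term (after a density reduction to bounded test functions) and the weak convergence of $\Delta\m_n$ dispatch the second.

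Finally, for claim (4) I would use the pointwise inequality $\bigl| |a|^2 a - |b|^2 b\bigr| \le C(|a|^2+|b|^2)|a-b|$ for $a,b\in\mathbb{R}^3$. The linear part $\m_n-\m$ is immediate from $\m_n\to\m$ in $L^2(\Omega^\p:L^2(0,T:L^2))$, a consequence of (1). For the cubic part, Hölder with three $L^4$ factors bounds $\big| |\m_n|^2\m_n - |\m|^2\m \big|_{L^{4/3}}$ by $C\l(|\m_n|_{L^4}^2+|\m|_{L^4}^2\r)|\m_n-\m|_{L^4}$; pairing against $V\in L^4(\Omega^\p:L^4(0,T:H^1))\embed L^4(\Omega^\p:L^4(0,T:L^4))$ and applying the generalized Hölder inequality once more in time and in $\Omega^\p$, the estimate factorizes into uniformly bounded norms times $\|\m_n-\m\|_{L^4(\Omega^\p:L^4(0,T:L^4))}$, which vanishes by (1). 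The only genuinely delicate point is the identification of the cross-product limit in (3); the remaining steps are a matter of arranging the exponents so that the strong $L^4$ convergence is always paired against uniformly bounded, weakly convergent factors.
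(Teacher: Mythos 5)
Your proposal is correct and follows essentially the same route as the paper, which itself omits the proof and defers to Section 5 of \cite{ZB+BG+Le_SLLBE}: there, (1) is obtained from the a.s.\ convergence in $\mathcal{Z}_T$ upgraded by uniform integrability and Vitali, (2) and (3) from boundedness in reflexive spaces plus identification of the limit via the splitting $(\m_n-\m)\times\Delta\m_n+\m\times(\Delta\m_n-\Delta\m)$, and (4) from the pointwise cubic-difference bound and H\"older against the strong $L^4$ convergence. Your exponent bookkeeping (the interpolation/Agmon bound giving $L^p(0,T:L^\infty)$ for $p<4$, hence $q<\tfrac43$, and $X^\beta\hookrightarrow L^3$ for $\beta>\tfrac14$) matches the ranges in the statement, so no gap.
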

	
	We skip the proof of this lemma and refer the reader to Section 5 in \cite{ZB+BG+Le_SLLBE} (see also \cite{ZB+BG+TJ_Weak_3d_SLLGE,ZB+BG+TJ_LargeDeviations_LLGE,UM+AAP_2021_LargeDeviationsSNSELevyNoise,ZB+Dhariwal_2012_SNSE_LevyNoise}). For a proof of the couple of lemmas that follow (Lemma \ref{lemma convergence for bn existence of weak martingale solution}, Lemma \ref{lemma convergence for Gn existence of weak martingale solution}), one can refer to the work \cite{ZB+UM_WeakSolutionSLLGE_JumpNoise}. In particular, one can refer to the preprint \cite{UM+SG_2022Preprint_SLLBE_LDP} for some detailed calculations.
	
	\begin{lemma}\label{lemma convergence for bn existence of weak martingale solution}
		Let $V\in L^4(\Omega^\p : L^2)$. Then the following convergence holds.
		\begin{equation}\label{eqn convergence for bn existence of weak martingale solution}
		\lim_{n\to\infty} \mathbb{E}^\p \l| \int_{0}^{T} \l\langle b_n\bigl(\m_n(s)\bigr) - b\bigl(\m(s)\bigr) , V \r\rangle_{L^2} \, ds \r|^2 = 0.
	\end{equation}
	\end{lemma}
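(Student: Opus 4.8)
The plan is to exploit that the drift correction $b$ defined in \eqref{eqn definition of b} is a deterministic operator independent of $n$ (so that $b_n = b$) which is globally Lipschitz on $L^2$, and then to upgrade the pathwise Lipschitz bound to the desired $L^2(\Omega^\p)$ convergence using the strong convergence of $\m_n$ to $\m$. First I would record the Lipschitz continuity of $b$: from $b(v) = \int_B H(l,v)\,\nu(dl)$ together with the estimate \eqref{eqn H Lipschitz} for $H$ and the integrability of $\nu$ on $B$, there is a constant $C>0$ with
\begin{equation*}
	\l| b(u) - b(v) \r|_{L^2} \leq \int_B \l| H(l,u) - H(l,v) \r|_{L^2} \, \nu(dl) \leq C \l| u - v \r|_{L^2}, \quad u,v \in L^2.
\end{equation*}

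Next, I would combine this with the Cauchy-Schwarz inequality in $L^2$ to bound the integrand pointwise in $(s,\omega^\p)$ by $C\l| \m_n(s) - \m(s) \r|_{L^2} \l| V \r|_{L^2}$, and then use the Cauchy-Schwarz inequality on $[0,T]$ to get
\begin{equation*}
	\l| \int_0^T \l\langle b(\m_n(s)) - b(\m(s)) , V \r\rangle_{L^2} \, ds \r|^2 \leq C^2 T \, \l| V \r|_{L^2}^2 \int_0^T \l| \m_n(s) - \m(s) \r|_{L^2}^2 \, ds.
\end{equation*}
Taking expectation and splitting the two random factors by the Cauchy-Schwarz inequality in $\omega^\p$ then gives
\begin{equation*}
	\mathbb{E}^\p \l| \int_0^T \l\langle b(\m_n(s)) - b(\m(s)) , V \r\rangle_{L^2} \, ds \r|^2 \leq C^2 T \l( \mathbb{E}^\p \l| V \r|_{L^2}^4 \r)^{1/2} \l( \mathbb{E}^\p \l( \int_0^T \l| \m_n(s) - \m(s) \r|_{L^2}^2 \, ds \r)^2 \r)^{1/2}.
\end{equation*}

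Finally, the first factor is finite since $V \in L^4(\Omega^\p : L^2)$, and I would control the second factor through the continuous embedding $L^4 \embed L^2$ on the bounded domain $\mathcal{O}$ and Hölder's inequality in time, which yield $\l( \int_0^T \l| \m_n - \m \r|_{L^2}^2 \, ds \r)^2 \leq C T \int_0^T \l| \m_n - \m \r|_{L^4}^4 \, ds$. After taking expectation, the right-hand side is a constant multiple of the fourth power of the $L^4(\Omega^\p : L^4(0,T:L^4))$ norm of $\m_n - \m$, which tends to $0$ by the strong convergence \eqref{eqn L4L4L4 convergence mn prime} of Lemma \ref{lemma convergence lemma 1 of FG approximates}; this establishes \eqref{eqn convergence for bn existence of weak martingale solution}.

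The argument is a chain of Cauchy-Schwarz and Hölder estimates, so the essential inputs are just the Lipschitz continuity of $b$ and the strong $L^4$ convergence of the approximations. The point requiring care is the bookkeeping of integrability exponents: the fourth moment of $V$ must be matched with the fourth-order control of $\m_n - \m$ so that \eqref{eqn L4L4L4 convergence mn prime} is directly applicable (a mere $L^2$-type convergence would not suffice to absorb $V$). I expect the only genuine subtlety to be the justification of the Lipschitz bound for $b$ when $\nu$ has infinite activity on $B$, where one should replace the uniform bound \eqref{eqn H Lipschitz} by the sharper quadratic scaling $\l| H(l,u) - H(l,v) \r|_{L^2} \leq C\, l^2 \l| u - v \r|_{L^2}$ combined with $\int_B l^2 \, \nu(dl) < \infty$.
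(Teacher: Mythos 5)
The paper itself gives no proof of this lemma; it defers to \cite{ZB+UM_WeakSolutionSLLGE_JumpNoise} and the preprint \cite{UM+SG_2022Preprint_SLLBE_LDP}. Your argument is correct and is essentially the standard one used in those references: identify $b_n$ with $b$ (the paper never defines $b_n$ separately, so this reading is the only sensible one), use the Lipschitz continuity of $b$ on $L^2$, and close the estimate with a chain of Cauchy--Schwarz/H\"older inequalities whose exponents are matched so that the strong convergence \eqref{eqn L4L4L4 convergence mn prime} can be invoked; the bookkeeping $(\int_0^T |\m_n-\m|_{L^2}^2\,ds)^2 \leq T|\mathcal{O}|\int_0^T |\m_n-\m|_{L^4}^4\,ds$ against $\mathbb{E}^\p|V|_{L^2}^4<\infty$ is exactly right. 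The one genuine subtlety is the one you flag yourself: Lemma \ref{lemma linear growth Lipschitz G and H} as stated only gives a Lipschitz constant uniform in $l\in B$, which does not suffice to integrate against $\nu$ when $\nu(B)=\infty$; the quadratic scaling $\l|H(l,u)-H(l,v)\r|_{L^2}\leq C\,l^2\l|u-v\r|_{L^2}$ (which is also what makes $b$ in \eqref{eqn definition of b} well defined in the first place) together with $\int_B l^2\,\nu(dl)<\infty$ is the correct repair, and it is the estimate actually proved in the cited works. With that refinement your proof is complete and self-contained, which is more than the paper provides.
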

	
	\begin{lemma}\label{lemma convergence for Gn existence of weak martingale solution}
		Let $V\in L^4(\Omega^\p:L^2)$. Then the following convergence holds.
		\begin{equation}\label{eqn convergence for Gn existence of weak martingale solution}
			\lim_{n\to\infty} \mathbb{E}^\p\l[ \int_{0}^{T} \int_{B} \bigl| \l\langle G_n(\m_n(s)) - G(\m(s)) , V \r\rangle_{L^2} \bigr|^2 \, \nu(dl) \, ds \r] = 0.
		\end{equation}
	\end{lemma}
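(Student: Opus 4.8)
The plan is to reduce the claim entirely to the strong convergence $\m_n\to\m$ already furnished by Lemma~\ref{lemma convergence lemma 1 of FG approximates}, using nothing more than the Lipschitz continuity of $G$ in its second argument. Throughout I read $G_n(\m_n(s))$ as $G(l,\m_n(s))$, with $G$ the operator from \eqref{eqn definition of G} (which does not itself depend on $n$; the subscript only records that it is evaluated along the $n$-th process), and likewise $G(\m(s))=G(l,\m(s))$, both regarded as functions of $l\in B$. First I would strip off $V$ by the Cauchy--Schwarz inequality in $L^2(\mathcal O)$, pointwise in $(\omega,s,l)$,
\begin{equation*}
	\bigl|\langle G(l,\m_n(s)) - G(l,\m(s)), V\rangle_{L^2}\bigr|^2 \le \bigl|G(l,\m_n(s)) - G(l,\m(s))\bigr|_{L^2}^2\,|V|_{L^2}^2,
\end{equation*}
and then integrate in $l$ against $\nu$. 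The Lipschitz estimate \eqref{eqn G Lipschitz}, read in its natural $\nu$-square-integrable form as the statement that $v\mapsto G(\cdot,v)$ is Lipschitz from $L^2$ into $L^2(B,\nu;L^2)$, gives
\begin{equation*}
	\int_B \bigl|G(l,\m_n(s))-G(l,\m(s))\bigr|_{L^2}^2\,\nu(dl) \le C\,|\m_n(s)-\m(s)|_{L^2}^2,
\end{equation*}
with $C$ independent of $n,s,\omega$, so that the full integrand is dominated by $C\,|\m_n(s)-\m(s)|_{L^2}^2\,|V|_{L^2}^2$.

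Next I would take the time integral and the expectation $\mathbb{E}^\p$ and separate the two factors by Cauchy--Schwarz on $\Omega^\p$:
\begin{align*}
	\mathbb{E}^\p\!\int_0^T\!\!\int_B \bigl|\langle \cdots\rangle_{L^2}\bigr|^2\,\nu(dl)\,ds
	&\le C\,\mathbb{E}^\p\!\left[|V|_{L^2}^2\int_0^T|\m_n(s)-\m(s)|_{L^2}^2\,ds\right] \\
	&\le C\,\bigl(\mathbb{E}^\p|V|_{L^2}^4\bigr)^{1/2}\left(\mathbb{E}^\p\Bigl(\int_0^T|\m_n(s)-\m(s)|_{L^2}^2\,ds\Bigr)^2\right)^{1/2}.
\end{align*}
The first factor is finite since $V\in L^4(\Omega^\p:L^2)$. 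For the second, boundedness of $\mathcal O$ yields the finite-measure embedding $L^4((0,T)\times\mathcal O)\embed L^2((0,T)\times\mathcal O)$, hence pathwise $\|\m_n-\m\|_{L^2(0,T:L^2)}\le C\|\m_n-\m\|_{L^4(0,T:L^4)}$; raising to the fourth power and taking $\mathbb{E}^\p$ upgrades \eqref{eqn L4L4L4 convergence mn prime} to $\m_n\to\m$ in $L^4(\Omega^\p:L^2(0,T:L^2))$, so the second factor tends to $0$. This establishes \eqref{eqn convergence for Gn existence of weak martingale solution}.

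I expect the only genuinely delicate point to be the $l$-integration against $\nu$: on the small-jump ball $B$ the Lévy measure may carry infinite mass, so the Lipschitz bound must be used in its $\nu$-square-integrable form rather than as a crude uniform-in-$l$ constant times $\nu(B)$. This is legitimate because the increment $G(l,u)-G(l,v)=\bigl(\Phi(l,u)-u\bigr)-\bigl(\Phi(l,v)-v\bigr)$ carries a factor vanishing with $l$ (the flow $\Phi(l,\cdot)$ reduces to the identity as $l\to0$), which is exactly the quantity made square-integrable by the defining property $\int_B|l|^2\,\nu(dl)<\infty$ of a Lévy measure. Everything else is a routine use of Hölder's inequality together with the already-established strong $L^4$ convergence.
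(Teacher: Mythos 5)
Your argument is correct, but there is no in-paper proof to compare it against: the paper explicitly defers both Lemma \ref{lemma convergence for bn existence of weak martingale solution} and Lemma \ref{lemma convergence for Gn existence of weak martingale solution} to \cite{ZB+UM_WeakSolutionSLLGE_JumpNoise} and the preprint \cite{UM+SG_2022Preprint_SLLBE_LDP}. Your chain --- Cauchy--Schwarz in $L^2(\mathcal O)$ to strip off $V$, Lipschitz continuity of $G$ in the state variable, Cauchy--Schwarz on $\Omega^\p$ using $V\in L^4(\Omega^\p:L^2)$, and the finite-measure embedding $L^4((0,T)\times\mathcal O)\embed L^2((0,T)\times\mathcal O)$ to invoke \eqref{eqn L4L4L4 convergence mn prime} --- is the standard route taken in those references, and your reading of $G_n(\m_n(s))$ as $G(l,\m_n(s))$ is the only sensible one, since no $n$-dependent operator $G_n$ is defined in this paper. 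The one point where you go beyond what is literally stated is the $l$-integration: \eqref{eqn G Lipschitz} gives only a Lipschitz constant uniform in $l\in B$, which closes the estimate only when $\nu(B)<\infty$, whereas a general L\'evy measure may have infinite mass on $B$. You correctly identify that the needed refinement is $\l|G(l,u)-G(l,v)\r|_{L^2}\leq C\,|l|\,\l|u-v\r|_{L^2}$ combined with $\int_B |l|^2\,\nu(dl)<\infty$, and your justification is sound: since $g$ is bounded linear, $G(l,u)-G(l,v)=(e^{lg}-I)(u-v)$ and $\|e^{lg}-I\|_{\mathcal L(L^2)}=O(|l|)$ on $B$. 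This is precisely the form of the estimate proved in \cite{ZB+UM_WeakSolutionSLLGE_JumpNoise}; in a self-contained write-up it should be stated as a lemma rather than left as an expectation, but with it in hand your proof is complete.
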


		\begin{lemma}\label{lemma convergence of control term}
			Let $V\in L^6(\Omega^\p:L^2)$. Then the following convergence holds.
			\begin{align}
				\lim_{n\to\infty} \mathbb{E}^\p \l| \int_{0}^{T} \int_{\mathbb{U}} \l\langle L\l( \m_n(s) , v \r) , V \r\rangle_{L^2} \, \lambda_n^\p(dv,ds)
				- \int_{0}^{T} \int_{\mathbb{U}} \l\langle L\l( \m_n(s) , v \r) , V \r\rangle_{L^2} \, \lambda^\p(dv,ds) \r|.
			\end{align}
		\end{lemma}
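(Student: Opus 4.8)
The plan is to treat the integrand as an $n$-dependent Carath\'eodory function and pass to the limit by comparing it against the $n$-\emph{independent} integrand obtained by freezing the state at the limit $\m$. Abbreviate $h_n(s,v):=\l\langle L(\m_n(s),v),V\r\rangle_{L^2}$ and $h(s,v):=\l\langle L(\m(s),v),V\r\rangle_{L^2}$, so that the quantity inside the expectation is $A_n-B_n$ with $A_n=\int_0^T\int_{\mathbb{U}}h_n\,\lambda_n^\p(dv,ds)$ and $B_n=\int_0^T\int_{\mathbb{U}}h_n\,\lambda^\p(dv,ds)$. Inserting $h$ I would split
\[
A_n-B_n=\int_0^T\!\!\int_{\mathbb{U}}(h_n-h)\,\lambda_n^\p(dv,ds)+\int_0^T\!\!\int_{\mathbb{U}} h\,\l(\lambda_n^\p-\lambda^\p\r)(dv,ds)+\int_0^T\!\!\int_{\mathbb{U}}(h-h_n)\,\lambda^\p(dv,ds)=:I_n+II_n+III_n,
\]
and estimate $\mathbb{E}^\p|I_n|,\mathbb{E}^\p|II_n|,\mathbb{E}^\p|III_n|$ separately. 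The terms $I_n,III_n$ carry the $n$-dependence of the integrand but are integrated against a single Young measure each; $II_n$ has a fixed integrand but compares the two measures, and this is where the stable (narrow) convergence $\lambda_n^\p\to\lambda^\p$ of Lemma \ref{lemma use of Skorohod Theorem} enters.

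For $I_n$ and $III_n$ I would split the inner integral over $\mathbb{U}=K_R\cup K_R^c$, where $K_R=\{v:\kappa(\cdot,v)\le R\}$ is compact by inf-compactness of $\kappa$. On $K_R$, since each time-slice of $\lambda_n^\p$ and $\lambda^\p$ is a probability measure, the contribution is dominated by $\int_0^T\sup_{v\in K_R}\l|L(\m_n(s),v)-L(\m(s),v)\r|_{L^2}\,|V|_{L^2}\,ds$; by Cauchy--Schwarz in $(s,\omega)$ and $V\in L^6(\Omega^\p:L^2)\subset L^2$, this tends to $0$ for each fixed $R$ thanks to the continuity assumption on $L$ (Assumption \ref{assumption}) together with the strong convergence $\m_n\to\m$ in $L^4(\Omega^\p:L^4(0,T:L^4))$ of Lemma \ref{lemma convergence lemma 1 of FG approximates}. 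On $K_R^c$ I would use the growth bound \eqref{eqn assumption on the operator L}, $|L(m,v)|_{L^2}\le C|m|_{L^2}^r\kappa(\cdot,v)$, and the inequality $\kappa\le R^{-3}\kappa^4$ valid on $\{\kappa>R\}$, to dominate the tail by a constant times $R^{-3}\sup_t\l(|\m_n|_{L^2}^r+|\m|_{L^2}^r\r)|V|_{L^2}\int_0^T\int_{\mathbb{U}}\kappa^4\,\lambda_n^\p$ (resp.\ $\lambda^\p$). This is small \emph{uniformly in} $n$ as $R\to\infty$: coercivity \eqref{eqn coercivity assumption} together with the equality of laws in Lemma \ref{lemma use of Skorohod Theorem} gives $\mathbb{E}^\p\int_0^T\int_{\mathbb{U}}\kappa^4\,\lambda_n^\p\le C^{-1}\mathscr{J}(\pi_n)$, which is bounded since $\mathscr{J}(\pi_n)\to\Lambda<\infty$ by \eqref{eqn pi n is a minimizing sequence} (the bound for $\lambda^\p$ is \eqref{eqn assumption on kappa}), while the all-order moments of $\m_n$ from Lemma \ref{lemma bounds 1 lemma mn prime} and $V\in L^6$ supply exactly the integrability to close the (generalized) H\"older estimate; concretely one trades $\kappa^4$ for $\kappa^{1+\delta}$ via Jensen in the velocity variable to obtain uniform integrability of the tails.

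For $II_n$ the integrand $h$ is independent of $n$, and this is the term requiring stable convergence. Since $v\mapsto\l\langle L(\m(s),v),V\r\rangle_{L^2}$ is continuous (Assumption \ref{assumption}) but only $\kappa$-bounded, I would first truncate it in value to a bounded, still $v$-continuous, Carath\'eodory integrand $h^M=(h\wedge M)\vee(-M)$. For such an integrand the $\mathbb{P}^\p$-a.s.\ stable convergence $\lambda_n^\p\to\lambda^\p$ of Lemma \ref{lemma use of Skorohod Theorem} gives $\int_0^T\int_{\mathbb{U}}h^M\,\lambda_n^\p\to\int_0^T\int_{\mathbb{U}}h^M\,\lambda^\p$ a.s., and dominated convergence in $\omega$ (the a.s.\ bound $MT$ being integrable) upgrades this to $\mathbb{E}^\p\l|\int_0^T\int_{\mathbb{U}}h^M\,(\lambda_n^\p-\lambda^\p)\r|\to 0$ for each fixed $M$. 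The truncation error $|h-h^M|$ is supported on $\{|h|>M\}$, where \eqref{eqn assumption on the operator L} forces $\kappa$ to be large, so it is absorbed into the same uniform $\kappa^4$-tail estimate used for $I_n,III_n$. Combining the three estimates I first let $n\to\infty$ (killing the $K_R$-parts of $I_n,III_n$ and the $h^M$-part of $II_n$) and then let $R,M\to\infty$ (killing the uniform tails), which shows $\lim_{n\to\infty}\mathbb{E}^\p|A_n-B_n|=0$, as claimed.

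I expect the main obstacle to be the term $II_n$, that is, passing the fixed but \emph{unbounded} integrand $\l\langle L(\m(s),v),V\r\rangle_{L^2}$ through the narrow Young-measure convergence: stable convergence only tests against bounded Carath\'eodory integrands, whereas here the integrand grows like $\kappa(\cdot,v)$. Reconciling this requires the value-truncation (or restriction to the compact sublevel sets $K_R$) and, crucially, a tail estimate on $K_R^c$ that is uniform in $n$. It is precisely this uniformity --- obtained from the coercivity-driven uniform $L^1(\Omega^\p)$-bound on $\int_0^T\int_{\mathbb{U}}\kappa^4\,\lambda_n^\p$, the all-order moment bounds on $\m_n$, and the choice $V\in L^6$ --- that permits the double limit; getting the H\"older bookkeeping of these three ingredients to balance is the delicate point.
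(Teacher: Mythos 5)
Your proposal is correct and follows essentially the same route as the paper: the same splitting into a state-convergence part (your $I_n$, $III_n$; the paper's $D_1$) and a Young-measure-convergence part (your $II_n$; the paper's $D_2$), the same cutoff at the sublevel sets $K_R$ of $\kappa$ with the continuity assumption on $L$ handling the compact part and the coercivity-driven uniform $\kappa^4$-moment handling the tail, and the same use of stable convergence of $\lambda_n^\p$ against a truncated Carath\'eodory integrand upgraded to convergence in expectation (you via dominated convergence after value-truncation, the paper via Vitali after the $\psi_R$ cutoff). The only cosmetic difference is that you prove the statement as literally printed (with $L(\m_n,\cdot)$ in both integrals), which adds the term $III_n$ handled identically to $I_n$, whereas the paper's proof targets the intended statement with $L(\m,\cdot)$ in the second integral.
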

		\begin{proof}[Proof of Lemma \ref{lemma convergence of control term}]
			We begin by first observing that
			\begin{align}\label{eqn convergence of the control term equation 1}
				\nonumber & \mathbb{E}^\p \int_{0}^{T}  \int_{\mathbb{U}} \l\langle L\l( \m_n(s) , v \r) , V \r\rangle_{L^2}\lambda_n^\p(dv,ds) 
				- \int_{0}^{T} \int_{\mathbb{U}} \l\langle L\l( \m(s) , v \r) , V \r\rangle_{L^2} \, \lambda^\p(dv,ds)  \\
				\nonumber = &  \mathbb{E}^\p \l( \int_{0}^{T} \int_{\mathbb{U}}  \l\langle L\l( \m_n(s) , v \r) , V \r\rangle_{L^2}\lambda_n^\p(dv,ds) 
				- \int_{0}^{T} \int_{\mathbb{U}} \l\langle L\l( \m(s) , v \r) , V \r\rangle_{L^2} \, \lambda_n^\p(dv,ds) \r) \\
				\nonumber & +  \mathbb{E}^\p \l( \int_{0}^{T} \int_{\mathbb{U}}  \l\langle L\l( \m(s) , v \r) , V \r\rangle_{L^2}\lambda_n^\p(dv,ds) 
				- \int_{0}^{T} \int_{\mathbb{U}} \l\langle L\l( \m(s) , v \r) , V \r\rangle_{L^2} \, \lambda^\p(dv,ds) \r) \\
				= & D_1 + D_2.
			\end{align}
			Let us introduce an auxiliary cut-off function $\psi_R$ for $R\geq 0$. For $v\in \mathbb{U}$, we define
			\begin{equation}
				\psi_R(v) = \begin{cases}
					& 1,\ \text{if}\ \kappa(\cdot,v) \leq R,\\
					& 0,\ \text{if}\ \kappa(\cdot,v) \geq 2R.
				\end{cases}
			\end{equation}
			\textbf{Calculations for $D_1$:}\\
   We first deal with the difference $D_1$. First, we note that the difference $D_1$ can be written as follows.

   \begin{align}
				\nonumber \mathbb{E}^\p  & \int_{0}^{T} \int_{\mathbb{U}}  \l\langle L\l( \m_n(s) , v \r) - L\l( \m(s) , v \r) , V \r\rangle_{L^2}\lambda_n^\p(dv,ds) \\
				\nonumber = & \mathbb{E}^\p  \int_{0}^{T} \int_{\mathbb{U}} \psi_R(v) \l\langle L\l( \m_n(s) , v \r) - L\l( \m(s) , v \r) , V \r\rangle_{L^2}\lambda_n^\p(dv,ds) \\
				\nonumber & + \mathbb{E}^\p  \int_{0}^{T} \int_{\mathbb{U}} \l( 1 - \psi_R(v) \r) \l\langle L\l( \m_n(s) , v \r) - L\l( \m(s) , v \r) , V \r\rangle_{L^2}\lambda_n^\p(dv,ds) \\
				= & \, D_{1,1} + D_{1,2}.
			\end{align}
   
			For $D_{1,1}$, we have the following sequence of inequalities. Let $K_{2R} = \l\{ v\in\mathbb{U} : \kappa(\cdot,v) \leq 2R\r\}$.
			\begin{align}
				\nonumber \l| D_{1,1} \r| \leq & \,  \mathbb{E}^\p \l| \int_{0}^{T} \int_{\mathbb{U}}  \psi_R(v) \l\langle L\l( \m_n(s) , v \r) - L\l( \m(s) , v \r) , V \r\rangle_{L^2}\lambda_n^\p(dv,ds) 
				 \r| \\
				\nonumber  \leq & \, C \, \mathbb{E}^\p \l| \int_{0}^{T}  \sup_{v\in K_{2R}} \l\langle  L\l( \m_n(s) , v \r) - L\l( \m(s) , v \r) , V \r\rangle_{L^2}
				 \r| \, ds\\
				\nonumber \leq & \, C \l( \mathbb{E}^\p  \int_{0}^{T}  \sup_{v\in K_{2R}} \l| L\l( \m_n(s) , v \r) - L\l( \m(s) \r) \r|_{L^2}^2 \, ds \r)^{\frac{1}{2}} 
				 \l(  \mathbb{E}^\p  \int_{0}^{T}  \l| V \r|_{L^2}^2  \, ds\r)^{\frac{1}{2}} \\
				 \leq & \, C \l( \mathbb{E}^\p  \int_{0}^{T}  \sup_{v\in K_{2R}} \l| L\l( \m_n(s) , v \r) - L\l( \m(s) \r) \r|_{L^2}^2 \, ds \r)^{\frac{1}{2}}. 
			\end{align}
			By Assumption \ref{assumption}, the right-hand side of the above inequality converges to $0$ as $n\to\infty$.

		\dela{\coma{	For $D_{1,2}$, we have the following argument.
			\begin{align}
				\nonumber \l| D_{1,2} \r| = & \l| \mathbb{E}^\p  \int_{0}^{T} \int_{\mathbb{U}} \l( 1 - \psi_R(v) \r) \l\langle L\l( \m_n(s) , v \r) - L\l( \m_n(s) , v \r) , V \r\rangle_{L^2}  \, \lambda_n^\p(dv,ds) \r| \\
				\nonumber \leq & \mathbb{E}^\p  \int_{0}^{T} \int_{\mathbb{U}} \l| \l( 1 - \psi_R(v) \r) \r|  \l| L\l( \m_n(s) , v \r) - L\l( \m_n(s) , v \r) \r|_{L^2} \l| V \r|_{L^2}  \,  \lambda_n^\p(dv,ds) \\
				\nonumber \leq & \l( \mathbb{E}^\p  \int_{0}^{T} \l| V \r|_{L^2}^2 \, ds \r)^{\frac{1}{2}} \centerdot \\
				\nonumber & \qquad \l( \mathbb{E}^\p  \int_{0}^{T} \int_{\mathbb{U}} \l| \l( 1 - \psi_R(v) \r) \r|^2  \l| L\l( \m_n(s) , v \r) - L\l( \m_n(s) , v \r) \r|_{L^2}^2  \,   \lambda_n^\p(dv,ds) \r)^{\frac{1}{2}}\\
				\nonumber \leq & C \l( \mathbb{E}^\p  \int_{0}^{T} \int_{\mathbb{U}} \l| \l( 1 - \psi_R(v) \r) \r|^2 \l[ \l| L\l( \m_n(s) , v \r) \r|_{L^2}^2 + \l| L\l( \m_n(s) , v \r) \r|_{L^2}^2 \r]   \, \lambda_n^\p(dv,ds) \r)^{\frac{1}{2}} \\
				\nonumber \leq & C \l( \mathbb{E}^\p  \int_{0}^{T} \int_{v\in\mathbb{U}:\kappa(\cdot,v \geq R)}  \l[ \l| L\l( \m_n(s) , v \r) \r|_{L^2}^2 + \l| L\l( \m_n(s) , v \r) \r|_{L^2}^2 \r]  \,   \lambda_n^\p(dv,ds) \r)^{\frac{1}{2}} \\
				\nonumber \leq & 2C \l( \mathbb{E}^\p  \int_{0}^{T} \int_{v\in\mathbb{U}:\kappa(\cdot,v \geq R)}  \l| \m_n(s) \r|_{L^2}^{2r} \kappa(s,v)  \,  \lambda_n^\p(dv,ds) \r)^{\frac{1}{2}} \\
				\nonumber \leq & 2C \l( \mathbb{E}^\p  \int_{0}^{T} \int_{v\in\mathbb{U}:\kappa(\cdot,v \geq R)}  \l| \m_n(s) \r|_{L^2}^{4r} \, ds \r)^{\frac{1}{4}} \centerdot 
    \\
				\nonumber & \qquad  \l( \mathbb{E}^\p  \int_{0}^{T} \int_{v\in\mathbb{U}:\kappa(\cdot,v \geq R)} \kappa^2(s,v)   \, \lambda_n^\p(dv,ds) \r)^{\frac{1}{4}} \\
				\nonumber \leq & C  \l( \mathbb{E}^\p  \int_{0}^{T} \int_{v\in\mathbb{U}:\kappa(\cdot,v \geq R)} \frac{1}{\kappa^2(s,v)} \kappa(s,v)^4  \,  \lambda_n^\p(dv,ds) \r)^{\frac{1}{4}} \\
			    \nonumber \leq & C \frac{1}{R^{\frac{1}{2}}} \l( \mathbb{E}^\p  \int_{0}^{T} \int_{v\in\mathbb{U}:\kappa(\cdot,v \geq R)} \kappa^4(s,v)   \, \lambda_n^\p(dv,ds) \r)^{\frac{1}{4}} \\
				\leq & \frac{C}{R^{\frac{1}{2}}}.
			\end{align}
   
   How is the $m_n$ term with power $4r$ handled?? Should the assumption on $r$ be changed??
   
   }
   }

   	For the term $D_{1,2}$, we have the following argument.
			\dela{
   \begin{align}
				\nonumber \l| D_{1,2} \r| = & \l| \mathbb{E}^\p  \int_{0}^{T} \int_{\mathbb{U}} \l( 1 - \psi_R(v) \r) \l\langle L\l( \m_n(s) , v \r) - L\l( \m_n(s) , v \r) , V \r\rangle_{L^2}  \, \lambda_n^\p(dv,ds) \r| \\
				\nonumber \leq & \mathbb{E}^\p  \int_{0}^{T} \int_{\mathbb{U}} \l| \l( 1 - \psi_R(v) \r) \r|  \l| L\l( \m_n(s) , v \r) - L\l( \m_n(s) , v \r) \r|_{L^2} \l| V \r|_{L^2}  \,  \lambda_n^\p(dv,ds) \\
				\nonumber \leq & \l( \mathbb{E}^\p  \int_{0}^{T} \l| V \r|_{L^2}^6 \, ds \r)^{\frac{1}{6}} \centerdot \\
				\nonumber & \qquad \l( \mathbb{E}^\p  \int_{0}^{T} \int_{\mathbb{U}} \l| \l( 1 - \psi_R(v) \r) \r|^2  \l| L\l( \m_n(s) , v \r) - L\l( \m_n(s) , v \r) \r|_{L^2}^{\frac{6}{5}}  \,   \lambda_n^\p(dv,ds) \r)^{\frac{5}{6}}\\
				\nonumber \leq & C \l( \mathbb{E}^\p  \int_{0}^{T} \int_{\mathbb{U}} \l| \l( 1 - \psi_R(v) \r) \r|^2 \l[ \l| L\l( \m_n(s) , v \r) \r|_{L^2}^{\frac{6}{5}} + \l| L\l( \m_n(s) , v \r) \r|_{L^2}^{\frac{6}{5}} \r]   \, \lambda_n^\p(dv,ds) \r)^{\frac{5}{6}} \\
				\nonumber \leq & C \l( \mathbb{E}^\p  \int_{0}^{T} \int_{v\in\mathbb{U}:\kappa(\cdot,v \geq R)}  \l[ \l| \m_n(s) \r|_{L^2}^{\frac{6r}{5}} + \l| \m(s) \r|_{L^2}^{\frac{6r}{5}} \r] \kappa(s,v)^{\frac{6r}{5}}  \,  \lambda_n^\p(dv,ds) \r)^{\frac{5}{6}} \\
				\nonumber \leq & 2C \l( \mathbb{E}^\p  \int_{0}^{T} \int_{v\in\mathbb{U}:\kappa(\cdot,v \geq R)}  \l| \m_n(s) \r|_{L^2}^{4r} \, ds \r)^{\frac{1}{4}} \centerdot 
    \\
				\nonumber & \qquad  \l( \mathbb{E}^\p  \int_{0}^{T} \int_{v\in\mathbb{U}:\kappa(\cdot,v \geq R)} \kappa^2(s,v)   \, \lambda_n^\p(dv,ds) \r)^{\frac{1}{4}} \\
				\nonumber \leq & C  \l( \mathbb{E}^\p  \int_{0}^{T} \int_{v\in\mathbb{U}:\kappa(\cdot,v \geq R)} \frac{1}{\kappa^2(s,v)} \kappa(s,v)^4  \,  \lambda_n^\p(dv,ds) \r)^{\frac{1}{4}} \\
			    \nonumber \leq & C \frac{1}{R^{\frac{1}{2}}} \l( \mathbb{E}^\p  \int_{0}^{T} \int_{v\in\mathbb{U}:\kappa(\cdot,v \geq R)} \kappa^4(s,v)   \, \lambda_n^\p(dv,ds) \r)^{\frac{1}{4}} \\
				\leq & \frac{C}{R^{\frac{1}{2}}}.
			\end{align}
   }

   \begin{align}
				\nonumber \l| D_{1,2} \r| = & \l| \mathbb{E}^\p  \int_{0}^{T} \int_{\mathbb{U}} \l( 1 - \psi_R(v) \r) \l\langle L\l( \m_n(s) , v \r) - L\l( \m(s) , v \r) , V \r\rangle_{L^2}  \, \lambda_n^\p(dv,ds) \r| \\
				\nonumber \leq & \, C \, \mathbb{E}^\p  \int_{0}^{T} \int_{\mathbb{U}} \l| \l( 1 - \psi_R(v) \r) \r|  \l| L\l( \m_n(s) , v \r) - L\l( \m(s) , v \r) \r|_{L^2} \l| V \r|_{L^2}  \,  \lambda_n^\p(dv,ds) \\	
    \nonumber \leq & \, C \, \mathbb{E}^\p  \int_{0}^{T} \int_{\mathbb{U}} \l| \l( 1 - \psi_R(v) \r) \r| \l[ \l| \m_n(s) \r|_{L^2}^{r} + \l| \m(s) \r|_{L^2}^{r} \r] \kappa(s,v) \l| V \r|_{L^2}  \,  \lambda_n^\p(dv,ds) \\	
    \nonumber \leq & \, C \l( \mathbb{E}^\p  \int_{0}^{T} \l[ \l| \m_n(s) \r|_{L^2}^{2r} + \l| \m(s) \r|_{L^2}^{2r} \r] \, ds \r)^{\frac{1}{2}} 
    \l( \mathbb{E}^\p  \int_{0}^{T} \int_{\mathbb{U}}  \l| 1 - \psi_R(v) \r| \kappa^3(s,v) \,  \lambda_n^\p(dv,ds) \r)^{\frac{1}{3}} \centerdot \\
    \nonumber & \qquad \l( \mathbb{E}^\p  \int_{0}^{T}  \l| V \r|_{L^2}^{6}  \, ds \r)^{\frac{1}{6}} \\
    \nonumber \leq  & \, C \l( \mathbb{E}^\p  \int_{0}^{T} \int_{\mathbb{U}}  \l| 1 - \psi_R(v) \r| \kappa^3(s,v) \,  \lambda_n^\p(dv,ds) \r)^{\frac{1}{3}} \\
				\nonumber \leq & \, C  \l( \mathbb{E}^\p  \int_{0}^{T} \int_{v\in\mathbb{U}:\kappa(\cdot,v \geq R)} \frac{1}{\kappa(s,v)} \kappa(s,v)^4  \,  \lambda_n^\p(dv,ds) \r)^{\frac{1}{3}} \\
			    \nonumber \leq & \, C \frac{1}{R^{\frac{1}{3}}} \l( \mathbb{E}^\p  \int_{0}^{T} \int_{v\in\mathbb{U}:\kappa(\cdot,v \geq R)} \kappa^4(s,v)   \, \lambda_n^\p(dv,ds) \r)^{\frac{1}{4}} \\
				\leq & \frac{C}{R^{\frac{1}{3}}}.
			\end{align}
   Here, we have utilized the assumption $0\leq r < 2$ (from Assumption \ref{assumption}), which in turn implies $2r < 4$. Hence we can use the inequalities \eqref{eqn L 4 L4 bound mn prime} and \eqref{eqn L 4 L4 bound m prime}.
			The right-hand side of the above inequality converges to $0$ as $n\to\infty$. This concludes the calculations for the term $D_1$ from \eqref{eqn convergence of the control term equation 1}.\\
   \textbf{Calculations for $D_2$:}\\
   We first recall the difference term here.
   \begin{align}\label{eqn D21 and D22}
       \nonumber D_2 = & \, \mathbb{E}^\p \l( \int_{0}^{T} \int_{\mathbb{U}}  \l\langle L\l( \m(s) , v \r) , V \r\rangle_{L^2}\lambda_n^\p(dv,ds) 
				- \int_{0}^{T} \int_{\mathbb{U}} \l\langle L\l( \m(s) , v \r) , V \r\rangle_{L^2} \, \lambda^\p(dv,ds) \r)\\
    \nonumber = & \mathbb{E}^\p \bigg( \int_{0}^{T} \int_{\mathbb{U}} \l( 1 - \psi_R(v) \r)  \l\langle L\l( \m(s) , v \r) , V \r\rangle_{L^2}\lambda_n^\p(dv,ds) \\
	\nonumber & \quad  - \int_{0}^{T} \int_{\mathbb{U}}\l( 1 - \psi_R(v) \r) \l\langle L\l( \m(s) , v \r) , V \r\rangle_{L^2} \, \lambda^\p(dv,ds) \bigg) \\
    \nonumber & + \mathbb{E}^\p \bigg( \int_{0}^{T} \int_{\mathbb{U}} \psi_R(v) \l\langle L\l( \m(s) , v \r) , V \r\rangle_{L^2}\lambda_n^\p(dv,ds) \\
	\nonumber & \quad - \int_{0}^{T} \int_{\mathbb{U}} \psi_R(v) \l\langle L\l( \m(s) , v \r) , V \r\rangle_{L^2} \, \lambda^\p(dv,ds) \bigg)\\
    = & D_{2,1} + D_{2,2}.
   \end{align}
   The calculations for the term $D_{2,1}$ can be done similar to the calculations for the term $D_{1,2}$. Hence we skip the calculations here and proceed with the term $D_{2,2}$.

We have the following calculations for the term $D_{2,2}$.
   Let us first define an operator $f:[0,T] \times \mathbb{U} \to \mathbb{R}$, given by 
   \begin{align}\label{eqn convergence F claim}
       f(s,v) = \Psi_R(v)\l\langle L\l( \m(s) , v \r) , V \r\rangle_{L^2},s\in[0,T],v\in\mathbb{U}.
   \end{align}
   \textbf{Claim: } $f\in L^1(0,T:C_b(\mathbb{U})),\ \mathbb{P}^\p$-a.s.
   \begin{proof}[\textbf{Proof of the claim}]

\begin{align}
    \nonumber \mathbb{E}^\p \l| f \r|_{L^1(0,T:C_b(\mathbb{U}))} = & \, \mathbb{E}^\p\int_{0}^{T} 
    \sup_{v \in \mathbb{U}} 
    \l| f(s,v) \r| \, ds  \\
    \nonumber = & \, \mathbb{E}^\p \int_{0}^{T} 
    \sup_{v \in \mathbb{U}} 
    \l| \Psi_R(v)\l\langle L\l( \m(s) , v \r) , V \r\rangle_{L^2} \r| \, ds  \\
    \nonumber \leq & \, \mathbb{E}^\p \int_{0}^{T} 
    \sup_{v \in \mathbb{U}} 
    \l| \Psi_R(v) \r| \l| L\l( \m(s) , v \r) \r|_{L^2} \l| V \r|_{L^2} \, ds  \\
    \nonumber \leq & \, C \, \mathbb{E}^\p \int_{0}^{T} \sup_{v \in \mathbb{U}} \l| \Psi_R(v) \r| \kappa(s,v) \l|  \m(s) \r|_{L^2}^r \l| V \r|_{L^2} \, ds  \\
    \nonumber \leq & \, C \l( \mathbb{E}^\p \int_{0}^{T} \sup_{v \in \mathbb{U}} \l| \Psi_R(v) \r| \kappa(s,v)^4 \, ds \r)^{\frac{1}{4}}
    \l( \mathbb{E}^\p \int_{0}^{T}  \l|  \m(s) \r|_{L^2}^{2r}  \, ds \r)^{\frac{1}{2}} \centerdot\\
    \nonumber & \qquad \l( \mathbb{E}^\p \int_{0}^{T}  \l| V \r|_{L^2}^4 \, ds \r)^{\frac{1}{4}} \\
     \leq & \, C \l( \mathbb{E}^\p \int_{0}^{T} \sup_{v \in \mathbb{U}} \l| \Psi_R(v) \r| \kappa(s,v)^4 \, ds  \r)^{\frac{1}{4}} < \infty.
\end{align}
Hence, we have
\begin{equation}
    \l| f \r|_{L^1(0,T:C_b(\mathbb{U}))} < \infty , \ \mathbb{P}^\p-\text{a.s.}
\end{equation}
This concludes the proof of the claim.
   \end{proof}
   We recall the term $D_{2,2}$ from \eqref{eqn D21 and D22}.
   \begin{align}
    \nonumber D_{2,2} = & \, \mathbb{E}^\p \bigg[ \int_{0}^{T} \int_{\mathbb{U}} \psi_R(v) \l\langle L\l( \m(s) , v \r) , V \r\rangle_{L^2}\lambda_n^\p(dv,ds) \\
	& \quad - \int_{0}^{T} \int_{\mathbb{U}} \psi_R(v) \l\langle L\l( \m(s) , v \r) , V \r\rangle_{L^2} \, \lambda^\p(dv,ds) \biggr].
   \end{align}
   Using Theorem 2.16 in \cite{ZB+RS}, along with the result \eqref{eqn convergence F claim}, we have the following convergence.
   \begin{align}
       \nonumber \lim_{n\to\infty} & \int_{0}^{T} \int_{\mathbb{U}} \psi_R(v) \l\langle L\l( \m(s) , v \r) , V \r\rangle_{L^2}\lambda_n^\p(dv,ds) \\
       & = \int_{0}^{T} \int_{\mathbb{U}} \psi_R(v) \l\langle L\l( \m(s) , v \r) , V \r\rangle_{L^2}\lambda^\p(dv,ds),\ \mathbb{P}^\p-\text{a.s.}
   \end{align}
   For $0\leq r < 2$, we can further prove that for some $q > 1$,
   
   \begin{equation}
       \mathbb{E}^\p \l| \int_{0}^{T} \int_{\mathbb{U}} \psi_R(v) \l\langle L\l( \m(s) , v \r) , V \r\rangle_{L^2}\lambda_n^\p(dv,ds) \r|^{q} < \infty,
   \end{equation}   
   and
    \begin{equation}
       \mathbb{E}^\p \l| \int_{0}^{T} \int_{\mathbb{U}} \psi_R(v) \l\langle L\l( \m(s) , v \r) , V \r\rangle_{L^2}\lambda^\p(dv,ds) \r|^{q} < \infty.
   \end{equation}
   Hence, using the Vitali convergence theorem (see for example, Theorem 4.5.4 in \cite{BogachevBook_MeasureTheor_2007}), we can conclude the convergence
   \begin{align}
       \nonumber & \lim_{n\to\infty} \mathbb{E}^\p \bigg[ \int_{0}^{T} \int_{\mathbb{U}} \psi_R(v) \l\langle L\l( \m(s) , v \r) , V \r\rangle_{L^2}\lambda_n^\p(dv,ds) \\
	& \quad - \int_{0}^{T} \int_{\mathbb{U}} \psi_R(v) \l\langle L\l( \m(s) , v \r) , V \r\rangle_{L^2} \, \lambda^\p(dv,ds) \biggr] = 0.
   \end{align}
   This concludes the calculations for $D_{2,2}$, and hence also the proof of Lemma \ref{lemma convergence of control term}.
		\end{proof}
Following the line of arguments from \cite{ZB+UM_WeakSolutionSLLGE_JumpNoise,ZB+UM+Zhai_Preprint_LDP_LLGE_JumpNoise,UM+AAP_2021_LargeDeviationsSNSELevyNoise}, we can prove that the tuple 
$$\pi^* = \l( \Omega^\p , \mathcal{F}^\p , \mathbb{F}^\p , \mathbb{P}^\p , \m , \eta^\p , \lambda^\p \r),$$
is a weak martingale solution to the problem \eqref{eqn relaxed controlled problem considered Marcus Form}.
What remains now is to show that the obtained solution $\pi^*$  indeed minimizes the cost $\mathscr{J}$.
By Lemma \ref{lemma use of Skorohod Theorem}, we observe that the laws of $\l(\m_n,\lambda_n^\p\r)_{n\in\mathbb{N}}$ and $\l(m_n,\lambda_n\r)_{n\in\mathbb{N}}$ are equal, and therefore,
\begin{align}
   \liminf_{n\to\infty} \mathbb{E}^\p \int_{0}^{T} F(\m_n(s),v) \, \lambda_n^\p(dv,ds)
    \leq  \liminf_{n\to\infty} \mathbb{E} \int_{0}^{T} F(m_n(s),v) \, \lambda_n(dv,ds).
\end{align}
By the convergence \eqref{eqn L4L4L4 convergence mn prime} of the sequence $\l\{ \m_n \r\}_{n\in\mathbb{N}}$, along with the lower semicontinuity assumption on the running cost $F$ (see Assumption \ref{assumption}) and followed by the Fatou's Lemma (see \cite{Royden_1968Book_RealAnalysis}), we conclude that
\begin{align}
    \nonumber \mathscr{J}(\pi^*) = & \, \mathbb{E}^\p \int_{0}^{T} F(\m(s),v) \, \lambda^\p(dv,ds) \\
    \nonumber \leq & \liminf_{n\to\infty} \mathbb{E}^\p \int_{0}^{T} F(\m_n(s),v) \, \lambda_n^\p(dv,ds) \\
    \leq & \liminf_{n\to\infty} \mathbb{E} \int_{0}^{T} F(m_n(s),v) \, \lambda_n(dv,ds) = \Lambda.
\end{align}
The last equality holds because the sequence $\l\{\pi_n\r\}_{n\in\mathbb{N}}$ is a minimizing sequence (see the equality \eqref{eqn pi n is a minimizing sequence}).
Hence $\pi^*$ minimizes the cost $\mathscr{J}$, and hence is an optimal control, as per Definition \ref{definition optimal control}, for the problem \eqref{eqn relaxed controlled problem considered Marcus Form}. This concludes the proof of Theorem \ref{thm Optimal control}.

\dela{

 \coma{Possibly remove the statement of this Theorem. Reference should suffice.}
		\begin{theorem}[Skorohod-Jakubowski, \cite{ZB+UM_WeakSolutionSLLGE_JumpNoise} ]\label{THeorem Skorohod Jakubowski}
		Let $\mathcal{X}$ be a topological space such that there exists a sequence of continuous functions $f_m : \mathcal{X}\to\mathbb{C}$ that separates points of $\mathcal{X}$. Let $\mathcal{V}$ be the $\sigma$-algebra generated by $\l\{ f_m \r\}_{m\in\mathbb{N}}$. Then we have the following assertions.
		\begin{enumerate}
			\item Every compact set $\mathcal{K}\subset \mathcal{X}$ is metrizable.
			\item Let $\mu_n$ be a tight sequence of probability measures on $\l( \mathcal{X} , \mathcal{V} \r)$. Then there exists a subsequence $\l\{\mu_{n_{k}}\r\}_{k\in\mathbb{N}}$, random variables $X_{k},X$ for $k\in\mathbb{N}$ on a common probability space $\l( \bar{\Omega} , \bar{\mathbb{F}} , \bar{\mathbb{P}}\r)$ with $\bar{\mathbb{P}}^{X_{k}} = \mu_k$ for each $k\in\mathbb{N}$ and $X_k\to X$ $\bar{\mathbb{P}}$-a.s. as $k\to\infty$.
		\end{enumerate}
	\end{theorem}

 }
	
	\bibliographystyle{plain}
	\bibliography{References_GokhaleSoham}
\end{document}